\documentclass[11pt, twoside]{article}
\usepackage[margin=1in]{geometry}

\usepackage{tikz}
\usetikzlibrary{calc}

\usepackage{amsmath,amssymb,amsthm}

\usepackage[utf8]{inputenc}

\usepackage{enumerate}

\usepackage{fancyhdr}
\usepackage{cite}

\usepackage{listings}
\newcommand{\TD}{\mathcal T}

\newtheorem{algorithm}{Algorithm}

\usepackage{hyperref}
\hypersetup{%
    colorlinks=True, 
    citecolor=blue,
    linkcolor=black}

\usepackage[charter]{mathdesign}

\usepackage{accents}

\newcommand{\rom}[1]{\uppercase\expandafter{\romannumeral #1\relax}}
  
  \theoremstyle{definition}
  \newtheorem{theorem}{Theorem}[section]

  \newtheorem{lemma}[theorem]{Lemma}
  \newtheorem{definition}[theorem]{Definition}
  \newtheorem{remark}[theorem]{Remark}

\newtheorem{thmx}{Assumption}
\newtheorem{assumption}[thmx]{Assumption}



  \newtheorem*{assumption*}{Assumption}

  
  \numberwithin{equation}{section}

\begin{document}

\title{A multi-material topology optimization algorithm based on the topological derivative}

\author{P. Gangl\footnote{Institute of Applied Mathematics, Graz University of Technology, Steyrergasse 30, 8010 Graz, Austria.  E-mail: gangl@math.tugraz.at} }

\date{\today}

\maketitle

\begin{abstract}
	We present a level-set based topology optimization algorithm for design optimization problems involving an arbitrary number of different materials, where the evolution of a design is solely guided by topological derivatives. Our method can be seen as an extension of the algorithm that was introduced in \cite{AmstutzAndrae2006} for two materials to the case of an arbitrary number $M$ of materials. We represent a design that consists of multiple materials by means of a vector-valued level set function which maps into $\mathbb R^{M-1}$. We divide the space $\mathbb R^{M-1}$ into $M$ sectors, each corresponding to one material, and establish conditions for local optimality of a design based on certain generalized topological derivatives. The optimization algorithm consists in a fixed point iteration striving to reach this optimality condition. Like the two-material version of the algorithm, also our method possesses a nucleation mechanism such that it is not necessary to start with a perforated initial design. We show numerical results obtained by applying the algorithm to an academic example as well as to the compliance minimization in linearized elasticity.
\end{abstract}

\section{Introduction}
Over the past decades, numerical shape and topology optimization techniques have become an integral part in the design process not only of mechanical structures, but also in applications from electromagnetics, fluid mechanics and many more. While classical shape optimization approaches \cite{DZ2} can only alter boundaries or material interfaces of a given design, topology optimization approaches are more general and can yield optimal designs of any topology. There are several classes of methods for optimizing the topology of a design. A thorough overview over the different approaches to topology optimization is given in the review article \cite{MauteSigmund2013}.

The idea of density-based topology optimization \cite{BendsoeSigmund2003} is to represent a design by means of a density variable $\rho$, which can attain any value between $0$ and $1$, where regions with $\rho(x) = 1$ are interpreted as one material and regions with $\rho(x) = 0$ as the other material (e.g. an isotropic material and void). In order to avoid large areas of intermediate density values $0 < \rho(x) < 1$, penalization is performed in the constitutive equation, which -- in combination with a constraint on the allowed volume -- makes ``black'' ($\rho(x) =1$) or ``white'' ($\rho(x) =0$) regions more favorable and therefore removes the ``gray'' ($0 < \rho(x) < 1$) areas. In order to avoid numerical instabilities, which occur as a consequence of the non-existence of solutions to the topology optimization problem, regularization techniques such as density filtering, sensitivity filtering or bounding the perimeter of the structure are used \cite{SigmundPetersson1998}. Density-based topology optimization approaches are among the most widely used topology optimization approaches and have been successfully applied to a large number of practical applications, see e.g. \cite{AageEtAlNature2017}.

In contrast to density-based methods, level set approaches \cite{OsherSethian:1988a} do not introduce intermediate material properties. Here, the domain of interest $\Omega$ is represented by means of a continuous function $\psi$ which attains negative values inside $\Omega$ and positive values outside $\Omega$. Thus, the boundary of $\Omega$ is given as the zero level set of $\psi$, i.e. $\partial \Omega = \{x: \psi(x) = 0 \}$. In the level set method for shape and topology optimization \cite{AllaireJouveToader2004}, the evolution of the level set function $\psi$ is guided by shape gradients by means of a Hamilton-Jacobi equation. The implicit geometry description by means of a level set function brings a lot of flexibility in the treatment of topological changes. However, since no topological sensitivity information is included, the method lacks a nucleation mechanism. Holes or different components can merge, but no holes can be created in the interior of the design. This problem is typically circumvented by choosing a perforated initial design with many circular holes. Also this approach has proven very useful in many practical applications, see e.g. \cite{FepponAllaireDapognyEtAl2019}.

As an alternative to choosing a perforated initial design in the level set method, a coupling of the Hamilton-Jacobi equation, which uses shape sensitivities, with the \textit{topological derivative} was proposed in \cite{AllaireJouve2006, BurgerHacklRing2004}. The topological derivative \cite{NovotnySokolowski2013} at a point $z$ indicates whether a change of material at that point will yield an increase or decrease of the objective function.

As opposed to the level set method proposed in \cite{AllaireJouveToader2004}, the algorithm introduced in \cite{AmstutzAndrae2006, Amstutz2011a}, which also uses a level set representation of the domain, exploits topological sensitivity information. Here, the evolution of the level set function is guided by the topological derivative. We will give a thorough introduction to this method in Section \ref{sec_algo_revisited}. A related topology optimization approach is the one introduced in \cite{YamadaIzuiNishiwakiTakezawa2010}, where instead of the topological derivative the sensitivity of the objective with respect to a variation of the level set function is used.

Most applications of and methods for topology optimization deal with finding the optimal distribution of two different phases (e.g. one isotropic material and void, or two different materials) within a given design area. However, most of the approaches mentioned above have been extended to the multi-material case where one is interested in distributing a certain number of different materials within the design region in an optimal way. Multi-material topology optimization in a density-based setting was investigated in \cite{HvejselLund2011}. The level set method based on shape sensitivities and a Hamilton-Jacobi equation was generalized to the multi-material case in \cite{WangWang2004,AllaireDapognyMichailidis2014, WangCMAME2015} and similar techniques have also been used in image segmentation \cite{VeseChan2002}. Multi-material topology optimization has also been used in a phase field setting \cite{BlankEtAl2014} and from an optimal control point of view \cite{ClasonKunisch2016}. Moreover, the approach introduced in \cite{YamadaIzuiNishiwakiTakezawa2010} has been extended to the multi-material case, e.g., in \cite{LimMin2012}.

In this work, we propose an alternative way of multi-material topology optimization which is based solely on topological derivatives. Our algorithm can be seen as a generalization of the work presented in \cite{AmstutzAndrae2006, Amstutz2011a} to the case of multiple materials. We use a description of the design by a vector-valued level set function. The goal of the method is to reach a local optimality condition which is expressed by means of topological derivatives. By construction, our algorithm is capable of altering shape and topology of an initial design. Holes can be nucleated and new components can be created. Therefore, there is no need to start with perforated initial designs.

The rest of this paper is organized as follows: In Section \ref{sec_algo_revisited} we will revisit the algorithm introduced in \cite{AmstutzAndrae2006} and point out its main ingredients. In Section \ref{sec_algo_m_mat} we will first introduce a level set framework for multiple materials and then generalize the algorithm introduced in \cite{AmstutzAndrae2006} to the case of an arbitrary number of materials. Finally, we present numerical examples in Section \ref{sec_numerics}.

\section{Two-material topology optimization using topological derivatives}\label{sec_algo_revisited}
In this section, we revisit the algorithm introduced in \cite{AmstutzAndrae2006} and recall its main ingredients. The algorithm uses a level set description of the design and the evolution of the design is guided solely by the topological derivative, allowing for nucleation of holes or creation of new components anywhere in the design domain.

\subsection{Topological derivative}
The topological derivative of a domain-dependent shape function represents its sensitivity with respect to a topological perturbation of the domain. The idea of the topological derivative was first used in \cite{EschenauerKobelevSchumacher1994} as the ``bubble method'' and was introduced in a mathematically rigorous way for the first time in \cite{SokolowskiZochowski1999}, see also \cite{NovotnySokolowski2013, NovotnySokolowskiZochowski2019} for an overview on the topic.

In the following, let $d \in \{1,2,3\}$ denote the space dimension and let an open hold-all domain $D \subset \mathbb R^d$ be given. Let $\mathcal P(D)$ denote the power set of $D$, i.e., the set of all subsets of $D$, and let $\mathcal A \subset \mathcal P(D)$ denote a set of admissible subsets of $D$, the definition of which may depend on the problem at hand. Let a shape function $\mathcal J$
\begin{align} 
	\begin{aligned}
		\mathcal J: \mathcal A &\rightarrow \mathbb R, \\
		\Omega &\mapsto \mathcal J(\Omega),
	\end{aligned}
\end{align}
be given. Let us further fix an open set $\Omega_1 \in \mathcal A$ and define $\Omega_2 := D \setminus \overline \Omega_1$. We define the topological derivative of the shape function $\mathcal J$ at a point $z\in \Omega_1$ with respect to a change of the material in a neighborhood $\omega_\varepsilon$ of $z$, see Fig. \ref{fig_setting} (left).

\begin{definition} \label{def_TD}
Let $\mathcal J: \mathcal A \rightarrow \mathbb R$ a shape function and $\Omega_1 \in \mathcal A$ an open admissible set. Let further $\omega \subset \mathbb R^d$ open with $0 \in \omega$.
Let $z \in \Omega_1$ and, for $\varepsilon>0$ small, let $\omega_\varepsilon := z + \varepsilon \omega$. Then, the topological derivative of $\mathcal J$ at the point $z \in \Omega_1$ is defined as the limit
\begin{align*}
	\TD^{1\rightarrow 2}(z) := \underset{\varepsilon \searrow 0}{\mbox{lim }} \frac{\mathcal J(\Omega_1 \setminus \overline \omega_\varepsilon ) - \mathcal J(\Omega_1) }{ \ell(\varepsilon) }
\end{align*}
where $\ell : \mathbb R^+ \rightarrow \mathbb R^+$ is a continuous positive function satisfying $\ell(\varepsilon) \rightarrow 0$ as $\varepsilon \rightarrow 0$.
\end{definition}
Here, the set $\omega$ determines the shape of the small inclusion $\omega_\varepsilon$. The most common choice is $\omega = B(0,1)$ to deal with ball-shaped inclusions, but also other shapes such as ellipses are possible. The function $\ell(\varepsilon)$ has to be chosen depending on the concrete application at hand. For most applications one has to choose $ \ell(\varepsilon) = |\omega_\varepsilon| = \varepsilon^d|\omega|$, however certain applications require different choices of $\ell(\varepsilon)$ \cite{NovotnySokolowski2013}.

Similarly to Definition \ref{def_TD}, by interchanging the roles of $\Omega_1$ and $\Omega_2$, the topological derivative can also be defined for a point $\tilde z \in \Omega_2$ with the corresponding inclusion $\tilde \omega_\varepsilon:= \tilde z + \varepsilon \omega$. Then, the topological derivative reads
\begin{align*}
	\TD^{2\rightarrow 1}(\tilde z) := \underset{\varepsilon \searrow 0}{\mbox{lim }} \frac{\mathcal J(\Omega_1 \cup \tilde \omega_\varepsilon ) - \mathcal J(\Omega_1) }{ \ell(\varepsilon) }.
\end{align*}

\begin{figure}
    \begin{tabular}{cc}
        \includegraphics[width=.48\textwidth]{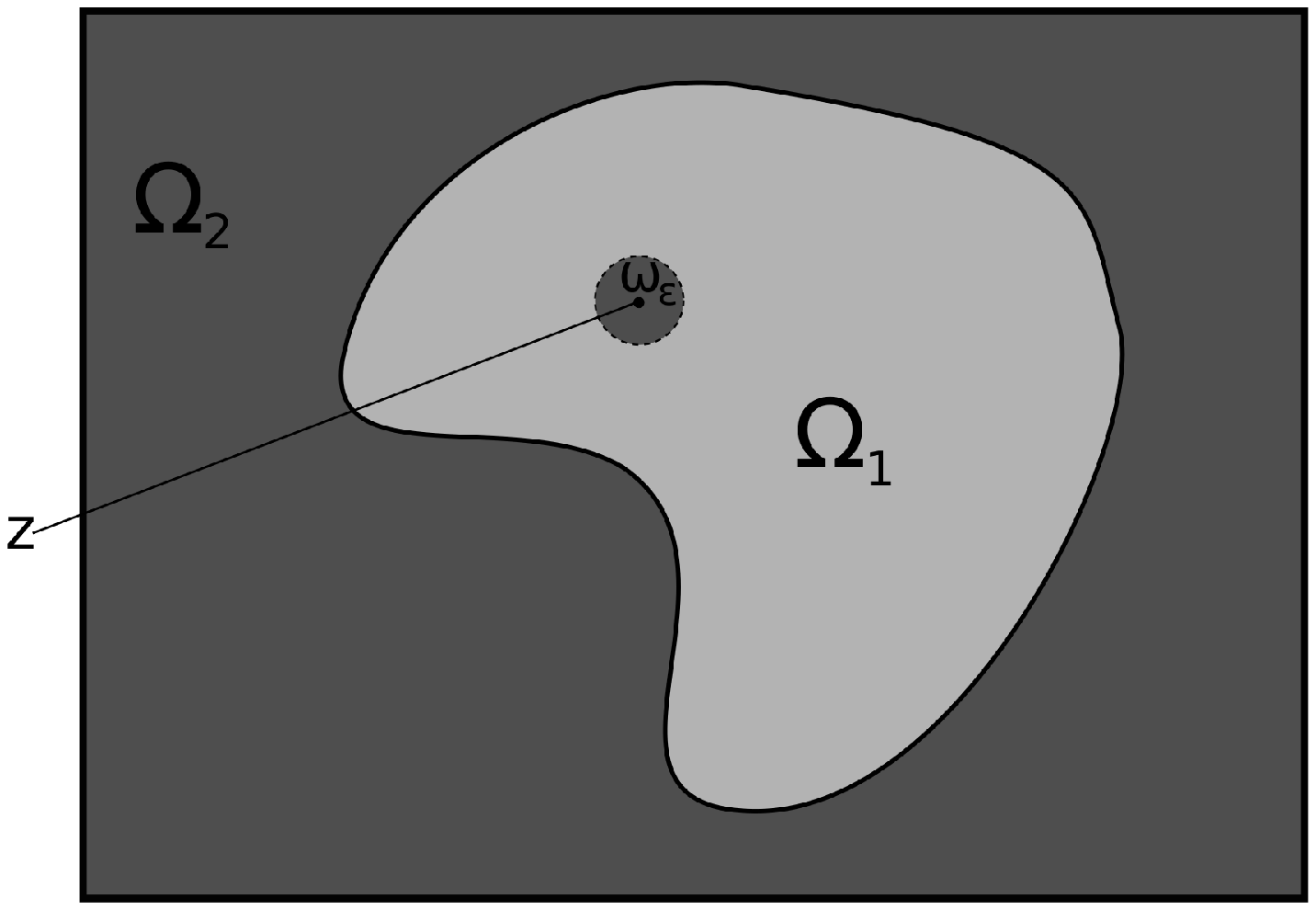} & \includegraphics[width=.5\textwidth]{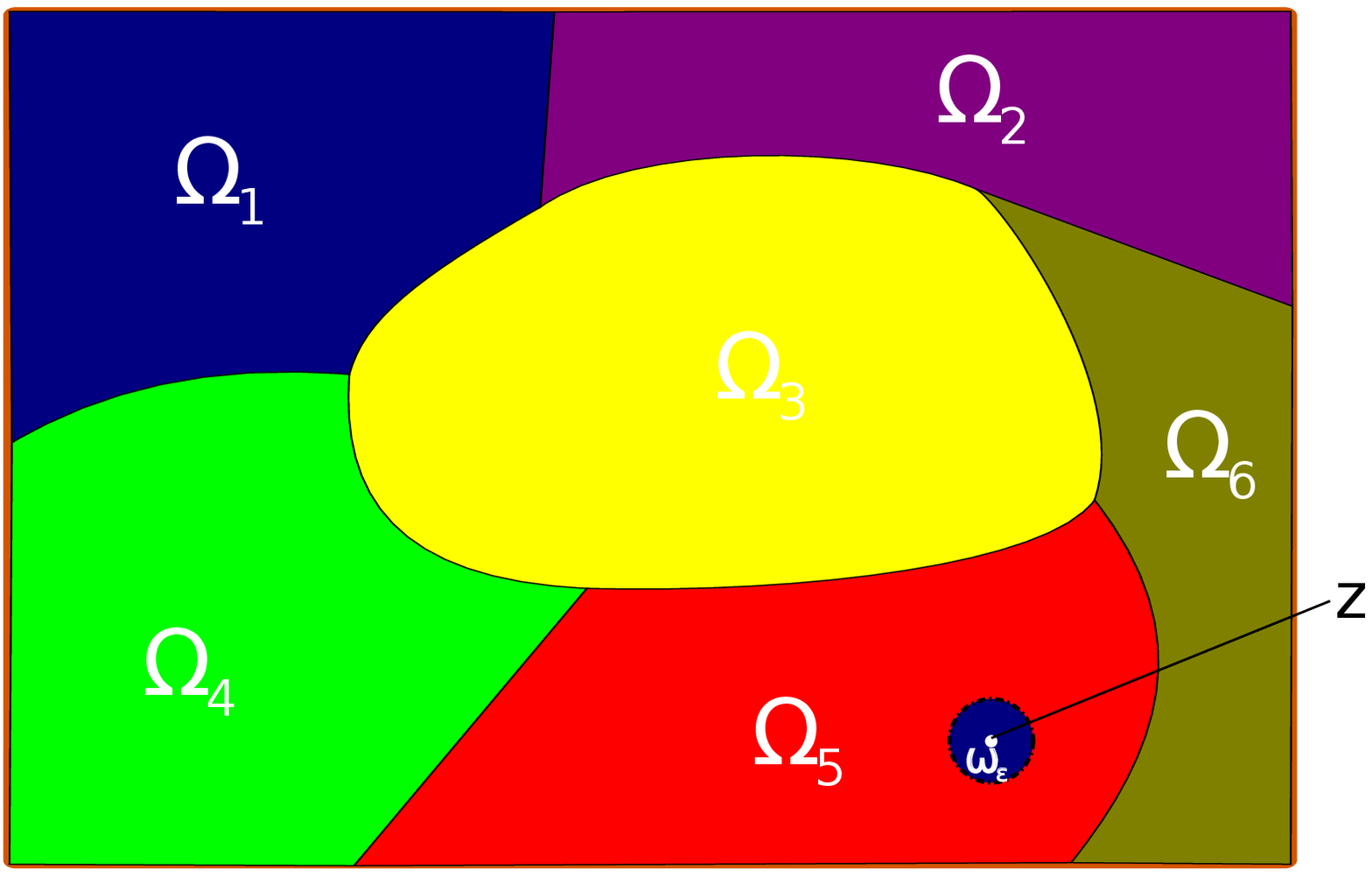}
    \end{tabular}
    \caption{Illustration of topological derivative $\TD^{1 \rightarrow 2}$ in case of two materials (left) and of topological derivative $\TD^{5 \rightarrow 1}$ in case of multiple (here: six) materials (right).}
    \label{fig_setting}
\end{figure}

\begin{remark}
The mathematically rigoros derivation of topological derivatives for shape functions that depend on the shape via the solution to a partial differential equation is a research field on its own. There exist different approaches for the derivation of topological derivatives, see e.g. \cite{NovotnySokolowski2013, NovotnySokolowskiZochowski2019, Amstutz2006, GanglSturm2019a}. Since the current work is concerned with an optimization algorithm based on topological derivatives, we assume that closed formulas for the topological derivatives are available for the sake of this paper.
\end{remark}

 Throughout this work, we focus on minimization problems and remark that maximization problems can be treated analogously after a simple modification. Based on these definitions, we obtain the following notion of local optimality with respect to topological changes.

\begin{definition} \label{def_optimality}
    Given a shape function $\mathcal J$, an open domain $\Omega_1 \in \mathcal A$ is locally optimal for the minimization of $\mathcal J$ with respect to topological changes if 
    \begin{align*}
       \TD(x):= \chi_{\Omega_1}(x) \TD^{1 \rightarrow 2}(x) +\chi_{\Omega_2}(x) \TD^{2 \rightarrow 1}(x) > 0
    \end{align*}
    for all $x \in \Omega_1  \cup \Omega_2$.
\end{definition}

Note that topological derivatives and therefore also $\mathcal T$ are only defined in the interior of subdomains and not on boundaries or material interfaces.

\subsection{Level set algorithm solely based on topological derivative}

In a level set framework, a domain $\Omega_1$ is represented by means of a function $\psi: D \rightarrow \mathbb R$ such that
\begin{align} \label{eq_psi_x_t}
    \left\lbrace \;
    \begin{aligned}
        \psi(x) < 0 &\Longleftrightarrow x \in \Omega_1, \\
        \psi(x) = 0 &\Longleftrightarrow x \in \Gamma, \\
        \psi(x) > 0 &\Longleftrightarrow x \in D \setminus \overline{\Omega_1}=: \Omega_2.
    \end{aligned} \right.
\end{align}
Here, $\Gamma = \partial \Omega_1 \cap D$ includes the material interface between $\Omega_1$ and $\Omega_2$ and excludes the boundary of the hold-all domain $\partial D$.
In the algorithm introduced in \cite{AmstutzAndrae2006}, the evolution of the level set function $\psi$ is guided by the \textit{generalized topological derivative}, which is defined as
\begin{align} \label{eq_Gpsi}
    G(x) := \begin{cases}
                        -\TD^{1\rightarrow 2}(x), & x \in \Omega_1,\\
                        \TD^{2\rightarrow 1}(x), & x \in \Omega_2,
                   \end{cases}
\end{align}
for a fixed domain $\Omega_1$ and its complement $\Omega_2 = D \setminus \overline{\Omega_1}$,. When the domains $\Omega_1$ and $\Omega_2$ are represented by the level set function $\psi$ via \eqref{eq_psi_x_t}, we will indicate the dependence of the generalized topological derivative $G$ and of the topological derivatives $\TD^{i \rightarrow j}$ on the design by the additional subscript $\psi$, i.e., we will write $G_\psi$ and $\TD_{\psi}^{i \rightarrow j}$.

In this setting, we can state a sufficient condition for a domain $\Omega_1$ to satisfy the local optimality condition of Definition \ref{def_optimality}:
\begin{lemma} \label{lem_optiCond_2mat}
    Let $\mathcal J : \mathcal A \rightarrow \mathbb R$ a shape function and $\Omega_1 \in \mathcal A$ open. Let $\psi: D \rightarrow \mathbb R$ the level set function representing $\Omega_1$ according to \eqref{eq_psi_x_t}. Assume that $\Omega_1$ is such that there exists $c>0$ such that 
    \begin{align} \label{eq_psi_Gpsi}
        \psi(x) = c \, G_{\psi}(x)
    \end{align}
    for all $x \in D \setminus \partial \Omega_1$. Then $\Omega_1$ is locally optimal with respect to topological changes according to Definition \ref{def_optimality}.
\end{lemma}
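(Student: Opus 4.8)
The plan is to argue by a direct case distinction according to the sign of $\psi(x)$. The key observation is that the proportionality hypothesis $\psi = c\,G_\psi$ with $c>0$ transfers sign information from the level set function to the generalized topological derivative, while the piecewise definition \eqref{eq_Gpsi} of $G_\psi$ in turn encodes $\TD_\psi^{1\rightarrow 2}$ and $\TD_\psi^{2\rightarrow 1}$, with a sign flip on $\Omega_1$. Before the case distinction I would record that the set on which the optimality condition of Definition \ref{def_optimality} must be verified, namely $\Omega_1 \cup \Omega_2$, coincides with $D \setminus \partial \Omega_1$: since $\Omega_1 \subseteq D$, $\Omega_2 = D \setminus \overline{\Omega_1}$ and $\Gamma = \partial \Omega_1 \cap D$, the hold-all splits as the disjoint union $D = \Omega_1 \cup \Gamma \cup \Omega_2$, whence $D \setminus \partial \Omega_1 = D \setminus \Gamma = \Omega_1 \cup \Omega_2$. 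Thus the hypothesis \eqref{eq_psi_Gpsi} is available at precisely the points where the conclusion is required.

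Next I would treat the two cases. For $x \in \Omega_1$ we have $\psi(x) < 0$ by \eqref{eq_psi_x_t}, so $G_\psi(x) = \psi(x)/c < 0$ because $c>0$; since $G_\psi(x) = -\TD_\psi^{1\rightarrow 2}(x)$ on $\Omega_1$ by \eqref{eq_Gpsi}, this yields $\TD_\psi^{1\rightarrow 2}(x) > 0$. Symmetrically, for $x \in \Omega_2$ we have $\psi(x) > 0$, hence $G_\psi(x) = \psi(x)/c > 0$, and since $G_\psi(x) = \TD_\psi^{2\rightarrow 1}(x)$ on $\Omega_2$, we obtain $\TD_\psi^{2\rightarrow 1}(x) > 0$. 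Substituting these two inequalities into the definition $\TD(x) = \chi_{\Omega_1}(x)\,\TD^{1\rightarrow 2}(x) + \chi_{\Omega_2}(x)\,\TD^{2\rightarrow 1}(x)$, and noting that for each $x \in \Omega_1 \cup \Omega_2$ exactly one characteristic function equals $1$ while the other vanishes, gives $\TD(x) > 0$ throughout $\Omega_1 \cup \Omega_2$, which is exactly the local optimality asserted by Definition \ref{def_optimality}.

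I do not expect a genuine obstacle here: the entire content lies in the careful bookkeeping of signs. The one point deserving attention is the sign convention built into \eqref{eq_Gpsi}, namely that the minus sign on $\Omega_1$ is precisely what makes a negative $\psi$ correspond to a positive $\TD^{1\rightarrow 2}$, so that a single constant $c>0$ can enforce the correct sign simultaneously in both subdomains. Finally I would remark that strictness of the inequality is preserved because $c>0$ and because $\psi(x) \neq 0$ for every $x \in D \setminus \partial \Omega_1$, which is consistent with the fact, noted after Definition \ref{def_optimality}, that the topological derivatives are only defined away from the interface $\Gamma$.
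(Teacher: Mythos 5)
Your argument is correct and follows essentially the same route as the paper's proof: a sign-bookkeeping case distinction on $\Omega_1$ and $\Omega_2$ using \eqref{eq_psi_x_t}, \eqref{eq_psi_Gpsi} and \eqref{eq_Gpsi}. The extra remarks on the identification $D \setminus \partial\Omega_1 = \Omega_1 \cup \Omega_2$ and the characteristic functions are fine but do not change the substance.
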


\begin{proof}
    Let $x$ in $\Omega_1$ arbitrary, but fixed. Due to \eqref{eq_psi_x_t}, condition \eqref{eq_psi_Gpsi} and the definition in \eqref{eq_Gpsi}, we have
    \begin{align*}
        0> \psi(x) = c \, G_{\psi}(x) = -c\,\TD_\psi^{1 \rightarrow 2}(x),
    \end{align*}
    for some $c>0$. Thus we have that $\TD_\psi^{1\rightarrow 2}(x) > 0$. Similarly, we get for an arbitrary, but fixed point $\tilde x \in \Omega_2$ that
    \begin{align*}
        0< \psi(\tilde x) = c \, G_{\psi}(\tilde x) = c \,\TD_\psi^{2\rightarrow 1}(\tilde x),
    \end{align*}
    and thus $\TD_\psi^{2\rightarrow 1}(\tilde x) > 0$, which finishes the proof.
\end{proof}

The idea of the algorithm introduced in \cite{AmstutzAndrae2006} is to start with an initial design $\Omega_1^{(0)}$ represented by a level set function $\psi_0$ and to reach a setting where condition \eqref{eq_psi_Gpsi} holds by means of a fixed point iteration. For numerical stability reasons, this fixed point iteration is performed on the unit sphere $\mathcal S$ of the Hilbert space $L^2(D)$, $\mathcal S = \{ v \in L^2(D): \| v\|_{L^2(D)} = 1 \}$.

The algorithm reads as follows:

\begin{algorithm} \label{algo_2mat}
    Choose initial design $\psi_0 \in \mathcal S$ and $\varepsilon_\theta >0$ (e.g. $\varepsilon_\theta = 0.5 ^\circ$).\\
    For $k=0,1,2,\dots$
    \begin{enumerate}
        \item Compute $G_{\psi_k}$ according to \eqref{eq_Gpsi}.
        \item Compute $\theta_k:= \mbox{arccos} \left[ \left(\psi_k, \frac{ G_{\psi_k} }{ \|G_{\psi_k}\|_{L^2(D)}} \right)_{L^2(D)} \right]$.
        \item If $\theta_k <  \varepsilon_\theta$ then stop, \\
        else set 
        \begin{equation}  \label{eq_psi_2mat_update}
            \psi_{k+1} = \frac{1}{\mbox{sin} \, \theta_k} \left[ \mbox{sin}((1-\kappa_k)\theta_k) \psi_k + \mbox{sin}(\kappa_k \theta_k) \frac{G_{\psi_k} }{ \| G_{\psi_k}\|_{L^2(D)}} \right], 
        \end{equation}
        where $\kappa_k =  \mbox{max} \{1, \frac{1}{2}, \frac{1}{4}, \dots \}$ such that $\mathcal J\left(\Omega_1^{(k+1)} \right) < \mathcal J\left(\Omega_1^{(k)}\right)$.
    \end{enumerate}
\end{algorithm}

In step 2 of Algorithm \ref{algo_2mat}, the angle $\theta_k$ in an $L^2(D)$-sense between the current level set function $\psi_k$ and a scaled version of the generalized topological derivative $G_{\psi_k}$ is computed. Note that if $\theta_k=0$ then condition \eqref{eq_psi_Gpsi} is satisfied and the corresponding domain $\Omega_1^{(k)}$ is locally optimal according to Definition \ref{def_optimality}. Also note that, by construction, we get that $\psi_k \in \mathcal S$ for all $k$. The sets $\Omega_1^{(k+1)}$ and $\Omega_1^{(k)}$ denote the shapes represented by the corresponding level set functions $\psi_{k+1}$ and $\psi_k$, respectively. For more details on the algorithm, we refer the reader to \cite{AmstutzAndrae2006, Amstutz2011a}.

As it is emphasized in \cite[Sec. 3.3]{AmstutzAndrae2006}, the algorithm is evolving along a descent direction, which is an important ingredient for it to be successful. This means that a local change of material around a point $\hat x \in \Omega_1 \cup \Omega_2$ from iteration $k$ of the algorithm to the next iteration $k+1$ can only happen if the corresponding topological derivative at iteration $k$ was negative. This observation can be summarized as follows:
\begin{lemma} \label{lem_descentDir_2mat}
    Let $\psi_k$ and $\psi_{k+1}$ be two subsequent iterates obtained by Algorithm \ref{algo_2mat}. Then we have for $\hat x \in D$ that
    \begin{enumerate}[(i)]
     \item $\psi_k(\hat x) < 0 < \psi_{k+1}(\hat x) \quad \Longrightarrow \quad \TD_{\psi_k}^{1 \rightarrow 2}(\hat x) < 0$,
     \item $\psi_k(\hat x) > 0 > \psi_{k+1}(\hat x) \quad \Longrightarrow \quad \TD_{\psi_k}^{2 \rightarrow 1}(\hat x) < 0$.
    \end{enumerate}
\end{lemma}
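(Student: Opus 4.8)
The plan is to read off both implications directly from the update formula \eqref{eq_psi_2mat_update} by a careful sign analysis of its three trigonometric coefficients, combined with the definition \eqref{eq_Gpsi} of the generalized topological derivative. For brevity I would abbreviate $g := G_{\psi_k}/\|G_{\psi_k}\|_{L^2(D)}$, so that evaluating \eqref{eq_psi_2mat_update} at the point $\hat x$ and multiplying by $\sin\theta_k$ yields the pointwise identity
\[ \sin\theta_k \, \psi_{k+1}(\hat x) = \sin\bigl((1-\kappa_k)\theta_k\bigr)\,\psi_k(\hat x) + \sin\bigl(\kappa_k\theta_k\bigr)\,g(\hat x). \]
Everything then reduces to determining the signs of the factors $\sin\theta_k$, $\sin(\kappa_k\theta_k)$ and $\sin((1-\kappa_k)\theta_k)$.

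First I would pin down these signs. Since $\psi_k \in \mathcal S$ and $g$ is a unit vector in $L^2(D)$, the Cauchy--Schwarz inequality gives $(\psi_k, g)_{L^2(D)} \in [-1,1]$, hence $\theta_k \in [0,\pi]$. Because an update step is only performed when the stopping test in step~3 fails, we have $\theta_k \geq \varepsilon_\theta > 0$, and the division by $\sin\theta_k$ appearing in \eqref{eq_psi_2mat_update} forces $\theta_k \neq \pi$; thus $\theta_k \in (0,\pi)$ and $\sin\theta_k > 0$. As $\kappa_k \in \{1,\tfrac12,\tfrac14,\dots\} \subset (0,1]$, it follows that $\kappa_k\theta_k \in (0,\theta_k] \subset (0,\pi)$ and $(1-\kappa_k)\theta_k \in [0,\theta_k) \subset [0,\pi)$, so that $\sin(\kappa_k\theta_k) > 0$ while $\sin((1-\kappa_k)\theta_k) \geq 0$.

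For (i), I would invoke the hypotheses $\psi_k(\hat x) < 0$ and $\psi_{k+1}(\hat x) > 0$. The left-hand side of the displayed identity is then strictly positive, while its first summand is $\leq 0$ because $\sin((1-\kappa_k)\theta_k) \geq 0$ and $\psi_k(\hat x) < 0$; hence the second summand must be strictly positive, and dividing by $\sin(\kappa_k\theta_k) > 0$ gives $g(\hat x) > 0$. Since $\psi_k(\hat x) < 0$ places $\hat x$ in the region represented as $\Omega_1$, definition \eqref{eq_Gpsi} yields $g(\hat x) = -\TD_{\psi_k}^{1\rightarrow 2}(\hat x)/\|G_{\psi_k}\|_{L^2(D)}$, whence $\TD_{\psi_k}^{1\rightarrow 2}(\hat x) < 0$. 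Claim (ii) follows from the mirror-image argument: the left-hand side is now strictly negative, the first summand is $\geq 0$, so the second is strictly negative, giving $g(\hat x) < 0$; and since $\psi_k(\hat x) > 0$ gives $g(\hat x) = \TD_{\psi_k}^{2\rightarrow 1}(\hat x)/\|G_{\psi_k}\|_{L^2(D)}$ via \eqref{eq_Gpsi}, we conclude $\TD_{\psi_k}^{2\rightarrow 1}(\hat x) < 0$.

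The argument is essentially a bookkeeping exercise, so I do not expect a deep obstacle. The only point requiring genuine care is establishing $\theta_k \in (0,\pi)$, so that all three sines carry the signs claimed above; in particular one must rule out the degenerate antipodal case $\theta_k = \pi$, in which $\sin\theta_k = 0$ and the update \eqref{eq_psi_2mat_update} is ill-defined. Once the coefficient signs are fixed, both implications drop out immediately.
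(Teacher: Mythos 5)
Your proof is correct and follows essentially the same route as the paper: a sign analysis of the coefficients $\sin\theta_k$, $\sin(\kappa_k\theta_k)$, $\sin((1-\kappa_k)\theta_k)$ in the update \eqref{eq_psi_2mat_update}, yielding $G_{\psi_k}(\hat x)>0$ and hence $\TD_{\psi_k}^{1\rightarrow 2}(\hat x)<0$ via \eqref{eq_Gpsi}. Your version is merely more explicit, in particular in handling the boundary case $\kappa_k=1$ (where $\sin((1-\kappa_k)\theta_k)=0$) and in justifying $\theta_k\in(0,\pi)$.
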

\begin{proof}
    Since $\mbox{sin}(\theta)>0$ and $\mbox{sin}(s \, \theta)>0$ for all $\theta \in (0, \pi)$ and all $s \in (0,1)$, we conclude from $\psi_k(\hat x) < 0 < \psi_{k+1}(\hat x)$ and \eqref{eq_psi_2mat_update} that $ G_{\psi_k}(\hat x) > 0$ and thus, since $\hat x \in \Omega_1^{(k)}$, it follows that $\TD_{\psi_k}^{1\rightarrow 2} (\hat x) < 0$. The second statement follows analogously.
\end{proof}
We will later establish a similar result in the case of multiple materials in Section \ref{sec_algo_m_mat}.

We make the following important remark concerning the numerical realization of Algorithm \ref{algo_2mat} in the PDE-constrained case, i.e., in the case where the objective function $\mathcal J$ depends on the shape $\Omega$ via the solution $u$ of a constraining boundary value problem. We restrict our discussion to the case of problems posed in an $H^1(D)$ setting such as linearized elasticity or Laplace-type problems. Problems where the solution $u$ is not continuous across element boundaries like problems posed in $H(curl,D)$ deserve a separate investigation.
The following aspects are discussed in \cite{AmstutzAndrae2006} for the case of linearized elasticity:
\begin{remark} \label{rem_interface}
    We choose a finite-dimensional space $V_h$ of $L^2(D)$ endowed with the $L^2(D)$ inner product to represent the design variable $\psi$. To guarantee the smoothness of the design, the level set function $\psi$ should be continuous across the interface $\partial \Omega_1$. Therefore, we choose $V_h$ to be the space of globally continuous and piecewise linear functions on a given triangulation of $D$. As proposed in \cite{AmstutzAndrae2006}, we will use the same mesh and the same finite elements for representing the design and for solving the PDE constraint. In elements that are cut by the material interface $\Gamma = \{x:\psi(x) = 0\}$, the material parameters are averaged by linear interpolation. Denoting the material parameters in $\Omega_i$ by $\alpha_i$, $i=1,2$, the material parameter in element $T$ is computed by
    $$ \alpha |_T = \frac{|T \cap \Omega_1|}{|T|} \alpha_1 + \frac{|T \cap \Omega_2|}{|T|} \alpha_2.$$
    Using piecewise linear finite elements, the resulting topological derivatives are often piecewise constant functions. In order to get a representation of the topological derivative in $V_h$, which is necessary for the numerical realization of \eqref{eq_psi_2mat_update}, we compute an average of the topological derivative around each node of the mesh. This gives us a function $\tilde G \in V_h$. Note that this latter procedure can also be seen as a sensitivity filtering technique. Such techniques are well-known in density-based topology optimization as regularization methods.
\end{remark}

\section{Multi-material topology optimization using topological derivatives}
\label{sec_algo_m_mat}

In this section, we introduce a generalization of Algorithm \ref{algo_2mat} to the case of an arbitrary number of materials. We will represent the design consisting of $M$ materials by means of a vector-valued level-set function $\psi$ mapping from the hold-all domain $D$ into $\mathbb R^{M-1}$. The algorithm will be very similar to Algorithm \ref{algo_2mat} with an appropriately chosen generalized topological derivative $ G$.

\subsection{Topological derivatives in multi-material setting}
We consider again an open and bounded hold-all domain $D \subset \mathbb R^d$ and a set of admissible subsets of $D$, $\mathcal A \subset \mathcal P(D)$. For $M>2$ let
\begin{align*}
    \mathcal A_M := \Biggl\{ &(\Omega_1, \dots,\Omega_M) \in \mathcal A \times \dots \times \mathcal A: \, \overline D = \bigcup_{l=1}^M \overline \Omega_l, \;  \Omega_i \text{ open}  \\
    & \text{ and } \Omega_i \cap \Omega_j = \emptyset \text{ for } i,j \in \{1,\dots,M\}, i\neq j \Biggr\}.
\end{align*}
A multi-material shape function is a mapping
\begin{align*}
    \mathcal J: \mathcal A_M &\rightarrow \mathbb R \\
    (\Omega_1, \dots, \Omega_M) & \mapsto \mathcal J(\Omega_1, \dots, \Omega_M).
\end{align*}
Similarly to Definition \ref{def_TD}, we define the topological derivatives of a multi-material shape function $\mathcal J$ for $(\Omega_1, \dots, \Omega_M)$ at a point $z \in \Omega_i$, $i \in \{1,\dots, M\}$:

\begin{definition} \label{def_TD_mMat}
    Let $\mathcal J : \mathcal A_M \rightarrow \mathbb R$ a multi-material shape function and $(\Omega_1,\dots, \Omega_M) \in \mathcal A_M$.
    Let $\omega \subset \mathbb R^d$ open with $0 \in \omega$ and $i,j \in \{1, \dots, M\}$ fixed with $i \neq j$. Let $z \in \Omega_i$ and, for $\varepsilon>0$ small, let $\omega_\varepsilon := z + \varepsilon \omega$. Then, the topological derivative at the point $z \in \Omega_i$ with respect to $\Omega_j$ is defined as the limit
\begin{align*}
	\TD^{i \rightarrow j}(z) := \underset{\varepsilon \searrow 0}{\mbox{lim }} \frac{\mathcal J(\Omega_1, \dots, \Omega_i \setminus \omega_\varepsilon, \dots, \Omega_j \cup \omega_\varepsilon, \dots, \Omega_M) - \mathcal J(\Omega_1, \dots, \Omega_M) }{ \ell(\varepsilon) }
\end{align*}
where $\ell : \mathbb R^+ \rightarrow \mathbb R^+$ is a continuous function satisfying $\ell(\varepsilon) \rightarrow 0$ as $\varepsilon \rightarrow 0$.

Moreover, for all $x \in \Omega_i$, we define the vector of topological derivatives
\begin{align} \label{eq_vecTDs}
    \TD^{(i)}(x):=\left( \begin{array}{c}
                                \TD^{i \rightarrow 1}(x)\\
                                \dots\\
                                \TD^{i \rightarrow i-1}(x)\\
                                \TD^{i \rightarrow i+1}(x)\\
                                \dots\\
                                \TD^{i \rightarrow M}(x)
                                        \end{array} \right) \in \mathbb R^{M-1},
\end{align}
and for all $x \in D \setminus \left( \bigcup_{j=1}^M \partial \Omega_j \right)$ we define
\begin{align*}
    \TD(x):= \sum_{i=1}^M \chi_{\Omega_i}(x) \TD^{(i)}(x).
\end{align*}

\end{definition}
 Again, we obtain a notion of local optimality:
 
 \begin{definition} \label{def_locOpt_mMat}
    Let a multi-material shape function $\mathcal J: \mathcal A_M \rightarrow \mathbb R$ be given. A tuple of domains $(\Omega_1, \dots, \Omega_M) \in \mathcal A_M$ is locally optimal for the minimization of $\mathcal J$ with respect to topological changes if
    \begin{equation} \label{eq_locOpt_mMat}
         \TD(x) > \mathbf 0 \in \mathbb R^{M-1}
    \end{equation}
    for all $x \in D \setminus \left( \cup_{j=1}^M \partial \Omega_j \right)$.
 \end{definition}
 Here, $\mathbf 0$ represents the zero vector in $\mathbb R^{M-1}$ and the inequality in \eqref{eq_locOpt_mMat} is meant componentwise.

\subsection{Domain representation}
We represent a design $(\Omega_1, \dots, \Omega_M) \in \mathcal A_M$ by means of a vector-valued level set function $\psi : D \rightarrow \mathbb R^{M-1}$. Similarly to the two-material case, we will divide the image space of $\psi$, $\mathbb R^{M-1}$, into $M$ different convex, open sectors $S_1,\dots ,S_M$ such that $\mathbb R^{M-1} = \bigcup_{l=1}^M \overline S_l$. Each sector $S_l$, $l =1,\dots, M$, is uniquely determined by $M-1$ hyperplanes $ H_{l,1},\dots,  H_{l,l-1},  H_{l,l+1},  H_{l,M}$. Each hyperplane $H_{i,j}$ is uniquely determined by its normal vector $n^{i \rightarrow j} \in \mathbb R^{M-1}$ which is oriented such that $n^{i \rightarrow j}|_{\overline S_i \cap \overline S_j}$ is pointing out of sector $S_i$ and into sector $S_j$, see Figure \ref{fig_sectors} for an illustration in the case $M=3$.

\begin{definition} \label{def_matrixNormals}
    For $l \in \{1,\dots,M\}$, we define the matrix of normal vectors pointing into sector $S_l$ by
	\begin{equation*}
		N^{(l)} := \left( \begin{array}{cc}
					(n^{1\rightarrow l})^\top \\
					\dots \\
					(n^{l-1\rightarrow l})^\top \\
					(n^{l+1\rightarrow l})^\top \\
					\dots\\
					(n^{M\rightarrow l})^\top \\
		                 \end{array}
		                 \right) \in \mathbb R^{M-1,M-1}.
	\end{equation*}
\end{definition}

\begin{assumption} \label{assump_Ninvert}
    We assume that the sectors $S_l$, $l =1,\dots, M$,
    \begin{enumerate}[(a)]
     \item are convex and 
     \item are chosen in such a way that $N^{(l)}$ is invertible for all $l \in \{1,\dots, M\}$.
    \end{enumerate}
\end{assumption}

\begin{remark}
    In \cite{WangCMAME2015} and several other publications, the sectors are defined just by the signs of otherwise independent level set functions. As an example, when $M=3$, in that context the three sectors would be given as $S_1 =\{(\psi_1,\psi_2):\psi_1<0 \}$, $S_2=\{(\psi_1, \psi_2): \psi_1>0 \wedge \psi_2 <0 \}$, $S_3=\{(\psi_1, \psi_2): \psi_1>0 \wedge \psi_2 >0 \}$. Such a configuration is excluded by virtue of Assumption \ref{assump_Ninvert}(b) since the normal vectors $n^{2 \rightarrow 1}$ and $n^{3\rightarrow 1}$ would be linearly dependent and therefore the matrix $N^{(1)}$ would not be invertible in this setting. In our approach, it is important that the angles between the hyperplanes are smaller than 180$^\circ$.
\end{remark}

For the rest of this paper, we assume that Assumption~\ref{assump_Ninvert} is satisfied. In this setting, it holds that every sector $S_l$, $l=1\dots,M$, can be written as
\begin{equation} \label{char_S}
	S_l = \{y \in \mathbb R^{M-1}: y \cdot n^{j\rightarrow l}>0 \; \forall j \neq l \}.
\end{equation}

\begin{figure}
    \begin{center}
        \includegraphics[width=.5\textwidth]{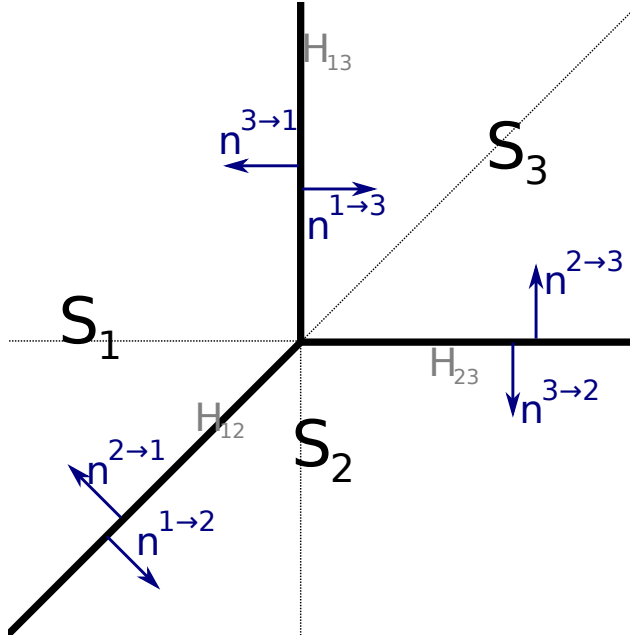}
    \end{center}
    \caption{Illustration of sectors in case $M=3$.}
    \label{fig_sectors}
\end{figure}

The subdomains $\Omega_l$, $l\in\{1,\dots,M \}$ are represented by means of a vector-valued level set function $\psi : D \rightarrow \mathbb R^{M-1}$ as
\begin{align} \label{def_psi_Mmat}
	 x \in \Omega_l \Longleftrightarrow \psi(x) \in S_l
\end{align}
 for all $l \in \{1,\dots,M\}$.

\subsection{Multi-material level set algorithm}
The main challenge in generalizing Algorithm \ref{algo_2mat} to the multi-material case is to define a generalized topological derivative such that, on the one hand, an optimality condition similar to the one in Lemma \ref{lem_optiCond_2mat} holds, and on the other hand the resulting algorithm is evolving along a descent direction, similarly to the observation made in Lemma \ref{lem_descentDir_2mat}.

We define the generalized topological derivative in the following way:
\begin{definition} \label{def_genTD_Mmat}
	For $l=1,\dots, M$ and  $x \in \Omega_l$ we define $G_l:\Omega_l \rightarrow \mathbb R^{M-1}$,
	\begin{align}
		G_l(x) := \left(N^{(l)}\right)^{-1} \TD^{(l)}(x),
	\end{align}
	with $N^{(l)}$ given by Definition \ref{def_matrixNormals} and $\TD^{(l)}$ defined in \eqref{eq_vecTDs}. Moreover, we define the generalized topological derivative $G:D \setminus \left( \bigcup_{j=1}^M \partial \Omega_j \right) \rightarrow \mathbb R^{M-1}$,
	\begin{equation} \label{eq_genTD_mMat}
		G(x):=  \sum_{l=1}^M \chi_{\Omega_l}(x) G_l(x).
	\end{equation}	
\end{definition}
Again note that $G$ is only defined on the union of the interior of the subdomains. We will again indicate the generalized topological derivative for a design represented by a vector-valued level set function $\psi$ by adding a subindex, i.e. by writing $G_\psi$.

\begin{lemma} \label{lem_Gn}
	Let the generalized topological derivative $G:D\setminus \left( \bigcup_{j=1}^M \partial \Omega_j \right)  \rightarrow \mathbb R^{M-1}$ be defined as in Definition \ref{def_genTD_Mmat}. Then it holds for all $l,i \in \{1,\dots,M\}$, $i\neq l$, and all $x \in \Omega_l$
	\begin{equation}
		G|_{\Omega_l}(x) \cdot n^{i\rightarrow l} = \TD^{l \rightarrow i}(x).
	\end{equation}
\end{lemma}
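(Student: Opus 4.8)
The plan is to reduce the statement to a single matrix identity and then read off the row indexed by $i$. The starting point is the definition $G_l(x) = \left(N^{(l)}\right)^{-1} \TD^{(l)}(x)$. Since Assumption~\ref{assump_Ninvert}(b) guarantees that $N^{(l)}$ is invertible, I may left-multiply this relation by $N^{(l)}$ to obtain the equivalent identity
\begin{equation*}
	N^{(l)} G_l(x) = \TD^{(l)}(x), \qquad x \in \Omega_l.
\end{equation*}
This is the only algebraic content of the lemma; everything else is bookkeeping.

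Next I would fix $x \in \Omega_l$ and observe that, by the definition of $G$ in \eqref{eq_genTD_mMat} and the fact that the characteristic functions $\chi_{\Omega_j}$ form a partition on $D \setminus \left(\bigcup_{j=1}^M \partial \Omega_j\right)$, we have $G|_{\Omega_l}(x) = G_l(x)$ on $\Omega_l$. Hence it suffices to prove the claim for $G_l$ in place of $G|_{\Omega_l}$, and the matrix identity above already encodes the full list of equalities $G_l(x)\cdot n^{j\rightarrow l} = \TD^{l\rightarrow j}(x)$ for every $j \neq l$ simultaneously.

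The final step is to extract the single component corresponding to a given $i \neq l$. Here the crucial point, which I would state explicitly, is that the rows of $N^{(l)}$ in Definition~\ref{def_matrixNormals} and the entries of $\TD^{(l)}$ in \eqref{eq_vecTDs} are listed in the \emph{same} order, namely by the index $j$ running over $\{1,\dots,M\}\setminus\{l\}$. Consequently the row of $N^{(l)} G_l(x) = \TD^{(l)}(x)$ that carries the normal $(n^{i\rightarrow l})^\top$ is matched precisely against the entry $\TD^{l\rightarrow i}(x)$, giving
\begin{equation*}
	\left(n^{i\rightarrow l}\right)^\top G_l(x) = \TD^{l\rightarrow i}(x),
\end{equation*}
which, rewriting $\left(n^{i\rightarrow l}\right)^\top G_l(x)$ as the inner product $G_l(x)\cdot n^{i\rightarrow l}$ and recalling $G_l(x) = G|_{\Omega_l}(x)$, is exactly the asserted equality.

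I do not expect any genuine obstacle in this argument: the lemma is, in effect, an immediate unpacking of the definition of $G_l$ as the solution of a square linear system. The only point requiring care is the index alignment between the rows of $N^{(l)}$ and the components of $\TD^{(l)}$; once the ordering convention from Definitions~\ref{def_TD_mMat} and \ref{def_matrixNormals} is invoked, the identification is forced. It is worth remarking that this lemma is precisely what will make $G$ the correct analogue of the two-material generalized topological derivative, since it recovers the individual pairwise topological derivatives $\TD^{l\rightarrow i}$ as the signed components of $G$ along the sector normals $n^{i\rightarrow l}$.
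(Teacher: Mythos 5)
Your proof is correct and is essentially the same argument as the paper's: both reduce the claim to the definition $G_l = \bigl(N^{(l)}\bigr)^{-1}\TD^{(l)}$ together with the alignment of the row ordering of $N^{(l)}$ with the component ordering of $\TD^{(l)}$. The only cosmetic difference is that you left-multiply by $N^{(l)}$ and read off the relevant row, whereas the paper dots $G_l$ with $n^{i\rightarrow l}$ and uses that $\bigl(\bigl(N^{(l)}\bigr)^{\top}\bigr)^{-1} n^{i\rightarrow l}$ is a unit vector; these are the same computation.
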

\begin{proof}
	By definition, we have that
	\begin{align*}
		G|_{\Omega_l}(x) \cdot n^{i\rightarrow l} &= G_l(x) \cdot n^{i\rightarrow l} = \left[\left(N^{(l)}\right)^{-1} \TD^{(l)}(x)\right] \cdot n^{i\rightarrow l} \\
		&= \left(\TD^{(l)}(x) \right)^\top \, \left(\left(N^{(l)}\right)^{\top}\right)^{-1} \, n^{i\rightarrow l}.
	\end{align*}
	Since the vector $n^{i\rightarrow l}$ appears as a column in the matrix $\left(N^{(l)}\right)^{\top}$, we get that
	\begin{align*}
		G|_{\Omega_l}(x) \cdot n^{i\rightarrow l} &= \left(\TD^{(l)}(x) \right)^\top e_k 
	\end{align*}
	where $e_k$ is the $k$-th unit vector in $\mathbb R^{M-1}$ and $k$ is the column index of $n^{i\rightarrow l}$ in $\left(N^{(l)}\right)^{\top}$, i.e., $k=i$ if $i<l$ and $k=i-1$ if $i>l$. On the other hand, the $k$-th element of the vector $\TD^{(l)}(x)$ is just $\TD^{l\rightarrow i}(x)$ which finishes the proof.
\end{proof}

Now we are able to show the following optimality condition:
\begin{theorem} \label{theo_optimality}
	Let $(\Omega_1, \dots, \Omega_M) \in \mathcal A_M$. Let $\psi : D \rightarrow \mathbb R^{M-1}$ be a vector-valued level set function such that \eqref{def_psi_Mmat} holds and let $G_\psi$ the generalized topological derivative according to Definition \ref{def_genTD_Mmat} for the configuration given by $\psi$. Suppose that there exists a positive constant $c>0$ such that 
	\begin{align} \label{eq_optiCond_Mmat}
        \psi(x) = c \, G_\psi(x)
	\end{align}
	for all $x \in \bigcup_{l=1}^M \Omega_l$. Then, $(\Omega_1, \dots, \Omega_M)$ is locally optimal according to Definition \ref{def_locOpt_mMat}.
	
\end{theorem}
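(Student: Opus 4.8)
The plan is to mirror the two-material argument of Lemma \ref{lem_optiCond_2mat}, with the single sign comparison there replaced by a comparison against each of the $M-1$ hyperplanes that bound the sector containing $\psi(x)$. The essential bridge is Lemma \ref{lem_Gn}, which converts the inner product of $G_\psi$ with a normal vector $n^{i\rightarrow l}$ into the scalar topological derivative $\TD^{l\rightarrow i}$; this is what lets the vector-valued optimality condition \eqref{eq_optiCond_Mmat} be decoded componentwise.

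First I would fix an arbitrary index $l \in \{1,\dots,M\}$ and an arbitrary point $x \in \Omega_l$. By the domain representation \eqref{def_psi_Mmat} this means $\psi(x) \in S_l$, and by the sector characterization \eqref{char_S} this is equivalent to the system of strict inequalities $\psi(x) \cdot n^{j\rightarrow l} > 0$ for every $j \neq l$. This is the multi-material replacement for the single inequality $\psi(x) < 0$ used in the two-material case.

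Next I would test the hypothesis \eqref{eq_optiCond_Mmat} against these normals. For each $i \neq l$, taking the inner product of $\psi(x) = c\,G_\psi(x)$ with $n^{i\rightarrow l}$ gives
\begin{align*}
    \psi(x) \cdot n^{i\rightarrow l} = c \, G_\psi|_{\Omega_l}(x) \cdot n^{i\rightarrow l} = c\,\TD^{l\rightarrow i}(x),
\end{align*}
where the last equality is exactly Lemma \ref{lem_Gn}. Combining this with the strict positivity $\psi(x)\cdot n^{i\rightarrow l} > 0$ from the previous step and with $c>0$ yields $\TD^{l\rightarrow i}(x) > 0$. Letting $i$ range over all indices different from $l$ shows $\TD^{(l)}(x) > \mathbf 0$ componentwise, which is precisely condition \eqref{eq_locOpt_mMat} restricted to the subdomain $\Omega_l$. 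Since $x \in \Omega_l$ and $l$ were arbitrary, and since $D \setminus \bigl( \bigcup_j \partial \Omega_j \bigr)$ is exactly the union of the open subdomains $\Omega_l$, the optimality condition holds everywhere it is required.

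I do not expect a genuine obstacle here, as the argument is a direct generalization of the scalar case. The only point requiring care is the index bookkeeping: one must confirm that pairing $G_\psi|_{\Omega_l}$ with the $i$-th defining normal $n^{i\rightarrow l}$ of the sector $S_l$ extracts the derivative $\TD^{l\rightarrow i}$ (material escaping \emph{from} $\Omega_l$ \emph{into} $\Omega_i$) and not $\TD^{i\rightarrow l}$; this orientation is exactly what Lemma \ref{lem_Gn} guarantees through the column-index matching in its proof. The invertibility of $N^{(l)}$ from Assumption \ref{assump_Ninvert} enters only implicitly, in ensuring that $G_l = \bigl(N^{(l)}\bigr)^{-1}\TD^{(l)}$ is well defined in the first place.
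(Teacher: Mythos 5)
Your proposal is correct and follows essentially the same route as the paper's own proof: fix $x\in\Omega_l$, use the sector characterization \eqref{char_S} to get $\psi(x)\cdot n^{i\rightarrow l}>0$ for all $i\neq l$, and then apply Lemma \ref{lem_Gn} to the hypothesis \eqref{eq_optiCond_Mmat} to conclude $\TD^{l\rightarrow i}(x)>0$. No substantive differences.
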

\begin{proof}
	Suppose that $\psi = G_\psi$ and take $\hat x \in \Omega_l$ for an arbitrary, but fixed $l \in \{1,\dots, M\}$. Then $\psi(\hat x) \in S_l$, i.e.,
		\begin{align*}
			\psi(\hat x) \cdot n^{i \rightarrow l}  > 0 \quad \mbox{for all } i\neq l,
		\end{align*}
		due to \eqref{char_S}. Thus, since $\hat x \in \Omega_l$, we have by assumption \eqref{eq_optiCond_Mmat} and due to Lemma~\ref{lem_Gn}
		\begin{align*}
			0<\psi(\hat x) \cdot n^{i \rightarrow l} = c \, G_\psi(\hat x) \cdot n^{i \rightarrow l}  =c \,G_\psi|_{\Omega_l}(\hat x) \cdot n^{i \rightarrow l} = c \,\TD^{l\rightarrow i}(\hat x)  \quad \mbox{for all } i\neq l.
		\end{align*}
	Since $c$ is positive and $l$ was arbitrary, the statement follows.
\end{proof}

Theorem \ref{theo_optimality} states that $\psi(x) = c \,G_\psi(x)$ with $c>0$ is a local sufficient optimality condition meaning that switching the material in only a small neighborhood of one point to any other material will yield an increase of the objective function. Analogously to the case of two materials, the idea of the algorithm is again to reach a design which satisfies \eqref{eq_optiCond_Mmat}. Again, this is achieved by a fixed point iteration on the unit sphere of a Hilbert space $X$ where the angle $\theta$ between the level set function $\psi$ and the corresponding generalized topological derivative $G_\psi$ is driven to zero. Here, we choose $X = L^2(D, \mathbb R^{M-1})$.

\begin{algorithm} \label{algo_TDLSMmat}
    Choose initial design $\psi_0 \in \mathcal S$ and $\varepsilon_\theta >0$ (e.g. $\varepsilon_\theta = 0.5 ^\circ$).\\
    For $k=0,1,2,\dots$
    \begin{enumerate}
        \item Compute $ G_{\psi_k}$ according to \eqref{eq_genTD_mMat}.
        \item Compute $\theta_k:= \mbox{arccos} \left[ \left(\psi_k, \frac{ G_{\psi_k} }{ \| G_{\psi_k}\|_{X}} \right)_{X} \right]$.
        \item If $\theta_k <  \varepsilon_\theta$ then stop, \\
        else set 
        \begin{equation}  \label{eq_psi_Mmat_update}
            \psi_{k+1} = \frac{1}{\mbox{sin} \, \theta_k} \left[ \mbox{sin}((1-\kappa_k)\theta_k) \psi_k + \mbox{sin}(\kappa_k \theta_k) \frac{ G_{\psi_k} }{ \| G_{\psi_k}\|_{X}} \right], 
        \end{equation}
        where $\kappa_k = \mbox{max} \{1, \frac{1}{2}, \frac{1}{4}, \dots \}$ such that $\mathcal J\left(\psi_{k+1} \right) < \mathcal J\left( \psi_k \right)$.
    \end{enumerate}
\end{algorithm}
Here we used the notation $\mathcal J(\psi_k)$ for the objective value for the design $(\Omega_1^{(k)}, \dots, \Omega_M^{(k)} )$ represented by the vector-valued level set function $\psi_k$.

Similarly as in the case of two materials, we can show that, when choosing the generalized topological derivative $G$ according to Definition \ref{def_genTD_Mmat}, Algorithm \ref{algo_TDLSMmat} is evolving along a descent direction:
\begin{theorem} \label{descentDirMmat}
	Let $\psi_k, \psi_{k+1} : D \rightarrow \mathbb R^{M-1}$ the vector-valued level set functions representing the designs at iterations $k$ and $k+1$ of Algorithm \ref{algo_TDLSMmat}, respectively. 	Then, for all $i,j \in \{1,\dots,M\}, i\neq j$, and all $\hat x \in D$, it holds:
	\begin{align*}
		\psi_k(\hat x) \in S_i \wedge \psi_{k+1}(\hat x) \in S_j \Longrightarrow \TD_{\psi_k}^{i\rightarrow j}(\hat x) <0.
	\end{align*}
\end{theorem}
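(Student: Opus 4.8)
The plan is to mirror the two-material argument of Lemma~\ref{lem_descentDir_2mat}: I would test the update formula \eqref{eq_psi_Mmat_update} against the normal vector $n^{j\rightarrow i}$ and then use Lemma~\ref{lem_Gn} to convert the resulting scalar inequality into a statement about $\TD_{\psi_k}^{i\rightarrow j}$. Fix $\hat x \in D$ and $i \neq j$ with $\psi_k(\hat x) \in S_i$ and $\psi_{k+1}(\hat x) \in S_j$. By \eqref{def_psi_Mmat} this means $\hat x \in \Omega_i^{(k)}$, so Lemma~\ref{lem_Gn} applies with $l=i$ and gives $G_{\psi_k}(\hat x) \cdot n^{j\rightarrow i} = \TD_{\psi_k}^{i\rightarrow j}(\hat x)$. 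It therefore suffices to prove $G_{\psi_k}(\hat x)\cdot n^{j\rightarrow i} < 0$.

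First I would record the sign structure of the update coefficients. Evaluating \eqref{eq_psi_Mmat_update} pointwise at $\hat x$ yields $\psi_{k+1}(\hat x) = \alpha\,\psi_k(\hat x) + \beta\, G_{\psi_k}(\hat x)/\|G_{\psi_k}\|_X$ with $\alpha = \sin((1-\kappa_k)\theta_k)/\sin\theta_k$ and $\beta = \sin(\kappa_k\theta_k)/\sin\theta_k$. Since a nontrivial update requires $\theta_k \in (0,\pi)$ and since $\kappa_k \in (0,1]$, the arguments satisfy $\kappa_k\theta_k \in (0,\pi)$ and $(1-\kappa_k)\theta_k \in [0,\pi)$, so that $\beta > 0$ and $\alpha \geq 0$.

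Next I would test this identity against $n^{j\rightarrow i}$. Because $\psi_k(\hat x) \in S_i$, the characterization \eqref{char_S} gives $\psi_k(\hat x)\cdot n^{j\rightarrow i} > 0$. For the new iterate, $\psi_{k+1}(\hat x) \in S_j$ gives $\psi_{k+1}(\hat x)\cdot n^{i\rightarrow j} > 0$, again by \eqref{char_S}; invoking the orientation convention for the normals — across the common interface hyperplane $\overline S_i \cap \overline S_j$ the vectors $n^{i\rightarrow j}$ and $n^{j\rightarrow i}$ point in opposite directions, so $n^{j\rightarrow i} = -\,n^{i\rightarrow j}$ — this turns into $\psi_{k+1}(\hat x)\cdot n^{j\rightarrow i} < 0$. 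Taking the inner product of the update identity with $n^{j\rightarrow i}$ and rearranging then gives
\begin{align*}
\beta\,\frac{G_{\psi_k}(\hat x)\cdot n^{j\rightarrow i}}{\|G_{\psi_k}\|_X} = \psi_{k+1}(\hat x)\cdot n^{j\rightarrow i} - \alpha\,\psi_k(\hat x)\cdot n^{j\rightarrow i} < 0,
\end{align*}
since the first term on the right is negative and $\alpha\,\psi_k(\hat x)\cdot n^{j\rightarrow i} \geq 0$. As $\beta > 0$ and $\|G_{\psi_k}\|_X > 0$, we obtain $G_{\psi_k}(\hat x)\cdot n^{j\rightarrow i} < 0$, which by Lemma~\ref{lem_Gn} is precisely $\TD_{\psi_k}^{i\rightarrow j}(\hat x) < 0$.

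The step I expect to require the most care is the relation $n^{j\rightarrow i} = -\,n^{i\rightarrow j}$, as this is what pins the violated half-space constraint of $S_i$ to the specific index $j$. Merely knowing $\psi_{k+1}(\hat x)\notin \overline S_i$ only yields $\psi_{k+1}(\hat x)\cdot n^{j'\rightarrow i} < 0$ for \emph{some} $j' \neq i$, which is too weak; the antiparallel relation, guaranteed by the convexity in Assumption~\ref{assump_Ninvert}(a) together with the orientation of the shared interface, is exactly what upgrades this to the index $j$ dictated by $\psi_{k+1}(\hat x)\in S_j$.
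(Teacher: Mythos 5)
Your proposal is correct and follows essentially the same route as the paper's proof: both test the update formula \eqref{eq_psi_Mmat_update} against the normal separating $S_i$ from $S_j$, use \eqref{char_S} for $\psi_k(\hat x)\in S_i$ and $\psi_{k+1}(\hat x)\in S_j$ together with $n^{j\rightarrow i}=-n^{i\rightarrow j}$, and conclude via Lemma~\ref{lem_Gn}. Your only (harmless) refinement is noting that the coefficient of $\psi_k$ is merely nonnegative when $\kappa_k=1$, where the paper asserts strict positivity of both coefficients.
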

Here, $\TD_{\psi_k}^{i\rightarrow j}(\hat x)$ represents the topological derivative for the design given by $\psi_k$ for switching from material $i$ to material $j$ around point $\hat x$.
\begin{proof}
	We have $\psi_k(\hat x) \in S_i$ and $\psi_{k+1}(\hat x) \in S_j$ where $i,j \in \{1,\dots,M \}, i\neq j$.
	From \eqref{eq_psi_Mmat_update}, we see that
        \begin{align} \label{eq_intermedres}
			\mu \frac{G_{\psi_k}(\hat x)}{\|G_{\psi_k}(\hat x)\|} = \psi_{k+1}(\hat x)  - \lambda \psi_k(\hat x).
		\end{align}
		with $\lambda, \mu >0$. Since $-n^{j\rightarrow i} = n^{i\rightarrow j}$ it holds that
		\begin{align*}
			(\psi_{k+1}(\hat x)  - \lambda \psi_k(\hat x))\cdot (- n^{j\rightarrow i}) = \psi_{k+1}(\hat x)\cdot n^{i\rightarrow j} +\lambda \psi_k(\hat x) \cdot n^{j\rightarrow i} > 0
		\end{align*}
		because of \eqref{char_S} since $\psi_{k+1}(\hat x) \in S_j$ and $\psi_{k}(\hat x) \in S_i$. Thus, due to \eqref{eq_intermedres} and $\hat x \in \Omega_i$, we have that also
		\begin{align*}
			0< G_{\psi_k}(\hat x)  \cdot (- n^{j\rightarrow i})= - G_{\psi_k}|_{\Omega_i}(\hat x)  \cdot n^{j\rightarrow i} = - \TD_{\psi_k}^{i\rightarrow j}(\hat x)
		\end{align*}
		due to Lemma \ref{lem_Gn} and thus $\TD_{\psi_k}^{i\rightarrow j}(\hat x)<0$.

\end{proof}

Theorem \ref{descentDirMmat} says that, when the design is switched from one material to another at a certain point in the domain in the course of Algorithm \ref{algo_TDLSMmat}, then the corresponding topological derivative at iteration $k$ was negative. Thus, the algorithm is somehow evolving along a descent direction, which is an important ingredient for the algorithm to reach the optimality condition of Theorem \ref{theo_optimality}.

\section{Numerical Experiments} \label{sec_numerics}
We will illustrate the flexibility and the potential of Algorithm \ref{algo_TDLSMmat} in two classes of examples. On the one hand, we consider classical applications of minimizing the compliance in the framework of linearized elasticity. On the other hand, we show an academic example where the exact solution, which consists of $M=8$ phases, is known. Before showing numerical results, we will discuss how to obtain the necessary data structure for an arbitrary number of materials $M$.

\subsection{Creating data structure in arbitrary dimensions}
For the practical realization of Algorithm \ref{algo_TDLSMmat} with an arbitrary number of materials $M$, it is important to have access to the normal vector matrices $N^{(l)}$, $l=1,\dots, M$, introduced in Definition \ref{def_matrixNormals}, which define the sectors $S_1, \dots, S_M$ of $\mathbb R^{M-1}$. In particular, it is important to ensure that each of the matrices $N^{(l)}$ is invertible (cf. Assumption \ref{assump_Ninvert}). This is equivalent to making sure that, for each $l \in \{1,\dots, M\}$, the normal vectors $n^{i\rightarrow l}$, $i\neq l$, pointing into sector $S_l$ are linearly independent. We emphasize that this data has to be created only once for any fixed $M$. For that reason, we provide a MATLAB implementation of this part of the code in \ref{appendix_code}. Since the code has to be run only once for any fixed $M$ and the computational time is negligible for a moderate number of materials $M$, we put an emphasis on readability of the code over computational efficiency.

Recall that each of the sectors $S_l$ is defined by $M-1$ hyperplanes in $\mathbb R^{M-1}$. Each of the hyperplanes in $\mathbb R^{M-1}$ can be defined by $M-2$ linear independent vectors. We choose these vectors to be defined by two points: the origin $\mathbf 0 \in \mathbb R^{M-1}$ and one other point. The structure is fully determined by the choice of these points. The procedure reads as follows:
\begin{enumerate}[(1)]
    \item Define $P_0 = \mathbf 0$ the origin in $\mathbb R^{M-1}$ and choose $P_1, \dots, P_M \in \mathbb R^{M-1}$. These points have to be chosen in such a way that Assumption \ref{assump_Ninvert} is satisfied. This holds, e.g., for $P_i = e_i$ the $i$-th unit vector in $\mathbb R^{M-1}$ for $i=1,\dots,M-1$ and $P_M = (-1, \dots, -1) \in \mathbb R^{M-1}$.
    \item Define $\binom{M}{M-2} = M(M-1)/2$ many hyperplanes as the span of all combinations of $M-2$ vectors out of $M$ vectors of the form $(P_i - P_0)$, $i \in {1 , \dots, M}$. 
    \item Define $M$ sectors $S_1, \dots, S_M \subset \mathbb R^{M-1}$ where each sector is bounded by a certain combination of $M-1$ hyperplanes.
    \item For $i, j \in \{1,\dots, M\}$, define $n^{i \rightarrow j}$ as the unit normal vector of the hyperplane separating sector $S_i$ from sector $S_j$, oriented in such a way that $n^{i \rightarrow j}$ points out of $S_i$ and into $S_j$. It holds $n^{j \rightarrow i}=-n^{i \rightarrow j}$.
\end{enumerate}

\begin{center}
     \begin{table}
        \begin{tabular}{ccc}
        \textbf{M=3} & & \textbf{M=4} \\
        \begin{tabular}{c|c|| c}
                $i$ & $j$ & $n^{i \rightarrow j}$  \\ \hline\hline
                $1$ & $2$& $(\sqrt{2}/2,-\sqrt{2}/2)^\top$ \\
                $1$ & $3$& $(1,0)^\top$\\
                $2$ & $3$& $(0,1)^\top$
            \end{tabular}
            & \qquad \qquad &
            
            \begin{tabular}{c|c|| c}
                $i$ & $j$ & $n^{i \rightarrow j}$  \\ \hline\hline
                $1$ & $2$& $(\sqrt{2}/2,-\sqrt{2}/2, 0)^\top$ \\
                $1$ & $3$& $(\sqrt{2}/2,0,-\sqrt{2}/2)^\top$\\
                $1$ & $4$& $(1,0,0)^\top$\\
                $2$ & $3$& $(0, \sqrt{2}/2,-\sqrt{2}/2)^\top$ \\
                $2$ & $4$& $(0,1,0)^\top$\\
                $3$ & $4$& $(0,0,1)^\top$\\
            \end{tabular}
        \end{tabular}
        \caption{Examples of normal vectors $n^{i\rightarrow j}$ for $M=3$ and $M=4$; results obtained by call of function \texttt{createDataStructure(M)} and subsequently \texttt{getNormal(...)}, see \ref{appendix_code}.}
        \label{tab_data_struc_M3_M4}
    \end{table}
\end{center}

Examples of the involved normal vectors for $M=3$ and $M=4$ can be found in Table \ref{tab_data_struc_M3_M4}. Recall that the inverted normal vectors $n^{j \rightarrow i}$ can be obtained as $n^{j \rightarrow i} =- n^{j \rightarrow i}$. The function \texttt{createDataStructure(...)} computes the information needed for all normal vectors by following steps (1)--(4) from above. The normal can be extracted by a call of \texttt{getNormal(...)}, and the function \texttt{isInSector(...)} can be used to check whether a point $p \in \mathbb R^{M-1}$ is in a specific sector of $\mathbb R^{M-1}$. All of these methods are available in \ref{appendix_code}.

\subsection{Application to structural optimization}
We illustrate the usage of Algorithm \ref{algo_TDLSMmat} for three classical problems from structural optimization where we minimize the compliance of a structure. We search for the optimal distribution of $M=3$ different isotropic materials (a strong material, a weak material and void) within a given domain $D \subset \mathbb R^2$. For the three phases, we use Young's moduli $E_1 = 1$ (strong), $E_2 = 0.5$ (weak), $E_3 = 10^{-4}$ (void) and Poisson's ratios $\nu_1 = \nu_2 = 0.3333$ and $\nu_3= 0.3333 *10^{-4}$. In addition we impose a bound on the allowed volumes in an implicit way, by adding the volumes of the strong material $|\Omega_1|$ and the weak material $|\Omega_2|$, weighted by factors $\ell_1 = 2$ and $\ell_2 = 0.5$, respectively, to the objective function. We solve the minimal compliance problem constrained by the PDE system of linearized elasticity in the plane stress setting,
\begin{subequations}
    \begin{align}
        \underset{(\Omega_1, \Omega_2,\Omega_3) \in \mathcal A_3}{\mbox{min }} \mathcal J(\Omega_1,\Omega_2, \Omega_3) = \mathcal C(u) &+ \ell_1 |\Omega_1| + \ell_2 |\Omega_2| \label{eq_functionalComp}\\
        \mbox{s.t. }  - \mbox{div}(A_{\Omega_1, \Omega_2, \Omega_3} \epsilon(u)) &= 0  \quad\mbox{in } D , \label{eq_pde_1}\\
        A_{\Omega_1, \Omega_2, \Omega_3} \epsilon(u) n  &= g \quad \mbox{on } \Gamma_N,\label{eq_pde_2}\\
        u &= 0 \quad\mbox{on } \Gamma_D,\label{eq_pde_3}\\
        A_{\Omega_1, \Omega_2, \Omega_3} \epsilon(u) n  &= 0 \quad \mbox{on } \partial D \setminus (\Gamma_N \cup \Gamma_D),\label{eq_pde_4}
    \end{align}
\end{subequations}
on the domain $D \subset \mathbb R^2$. Here, $\epsilon(u)=\frac{1}{2}(\nabla u + \nabla u^\top)$ represents the symmetric gradient, $\mathcal C(u) = \int_D A_{\Omega_1, \Omega_2, \Omega_3} \epsilon(u) : \epsilon(u) \mbox{d}x$ represents the compliance of the structure and $A_{\Omega_1, \Omega_2, \Omega_3}(x)= \chi_{\Omega_1}(x) A_1 +  \chi_{\Omega_2}(x) A_2 + \chi_{\Omega_3}(x) A_3$ the elasticity tensor. The elasticity tensors $A_i$, $i=1,2$, of strong and weak material are determined by the materials' Young's moduli $E_i$ and Poisson's ratios $\nu_i$ as
\begin{equation*}
    A_i e = 2 \mu_i e + \lambda_i \mbox{tr}(e) I, \; e \in \mathbb R^{2\times 2},
\end{equation*}
with the Lam\'e parameters $\lambda_i = \frac{E_i}{2(1+\nu_i)}$ and $\mu_i = \frac{\nu_i E_i}{2(1+\nu_i)(1-\nu_i)}$. The elasticity tensor for the void region is chosen as $A_3 = 10^{-4} (A_1 + A_2)$.

For any point $z \in \Omega_i$ and $i \in \{1,\dots,M\}$, the topological derivative for introducing an inclusion of material $j \neq i$ around $z$ reads (see e.g. \cite{GiustiFerrerOliver2016})
\begin{align} \label{eq_TDij_linEl}
    \TD^{i \rightarrow j}(z) = (\mathbb P^{i \rightarrow j} A_i \epsilon(u(z))) : \epsilon(u(z)) - \ell_i + \ell_j,
\end{align}
where $\ell_3 = 0$. Here, we used the Frobenius inner product defined by $A:B = \sum_{i,j=1}^n A_{ij}B_{ij}$ for $A,B \in \mathbb R^{n \times n}$. The fourth order polarization tensor $\mathbb P^{i \rightarrow j}$ is given by
\begin{align*}
    \mathbb P^{i \rightarrow j} =  \frac{1}{\beta \gamma + \tau_1} \left[ (1+\beta)(\tau_1-\gamma) \mathbb I + \frac{1}{2} (\alpha - \beta) \frac{\gamma(\gamma-2\tau_3) + \tau_1 \tau_2}{\alpha \gamma + \tau_2} (I \otimes I) \right]
\end{align*}
with the $(i,j)$-dependent coefficients
\begin{align*}
    &\alpha = \alpha^{(i)} = \frac{1+\nu_i}{1-\nu_i},& \; 
    &\beta = \beta^{(i)} = \frac{3-\nu_i}{1+\nu_i},& \; 
    &\gamma = \gamma^{(i \rightarrow j)} = \frac{E_j}{E_i},& \\
    &\tau_1 = \tau_1^{(i \rightarrow j)} = \frac{1+\nu_j}{1+\nu_i},& \;
    &\tau_2 = \tau_2^{(i \rightarrow j)} = \frac{1-\nu_j}{1-\nu_i},& \;
    &\tau_3 = \tau_3^{(i \rightarrow j)} = \frac{\nu_j (3 \nu_i-4)+1}{\nu_i (3 \nu_i-4)+1}.
\end{align*}
Here, $\mathbb I$ denotes the identity tensor of order 4 and $I$ the usual identity matrix in $\mathbb R^2$.
In the following, we will apply Algorithm \ref{algo_TDLSMmat} to three different settings, which are shown in Figure \ref{fig_settings}, where also the boundary regions $\Gamma_D$ with zero displacement and the region $\Gamma_N$ where a given force is acting, are indicated.
For the solution of the constraining boundary value problem \eqref{eq_pde_1}--\eqref{eq_pde_4}, we employed the finite element method using piecewise linear, globally continuous ansatz and test functions on a triangular mesh. The implementation is done in FreeFem++ \cite{Hecht2012} extending a code that was kindly provided by Dr. Charles Dapogny \cite{DapognyCode2018}.

In the course of the algorithm, the material interfaces are evolving over a fixed grid. We adopt the procedure suggested in the two-material case in \cite{AmstutzAndrae2006} and discussed here in Remark \ref{rem_interface} to the case of three materials as follows: On the one hand, the elasiticity tensor in the elements $T$ that are cut by a material interface is obtained by linear interpolation between all three materials,
\begin{equation} \label{eq_matInterpol_3mat}  A|_T = A_1 \frac{|T \cap \Omega_1|}{|T|} + A_2 \frac{|T \cap \Omega_2|}{|T|} + A_3 \frac{|T \cap \Omega_3|}{|T|}.\end{equation}
On the other hand, the topological sensitivities, which in the framework of piecewise linear finite elements are given as piecewise constant on the mesh, are interpolated the other way around, from the centers of the cells to the vertices. This is done by assigning the average of the sensitivities in the cells surrounding a mesh node $x_j$ to the new, filtered sensitivity at this node $x_j$. It is well-known that this kind of sensitivity filtering has a regularizing effect and avoids the formation of checkerboard patterns \cite{MauteSigmund2013}.

In numerical experiments, we observed (both in the multi-material case as well as in the case of only two materials) that, when choosing an unstructured triangular mesh, the non-symmetry of the mesh can be critical and lead to non-symmetric optimal designs when the step size parameter $\kappa$ in Algorithm \ref{algo_TDLSMmat} is chosen too large. For this reason, we set an upper bound $\overline \kappa$ on the maximum stepsize and choose $\kappa_k$ in iteration $k$ as $\kappa_k = \overline \kappa \, \mbox{max} \,  \{1, \frac{1}{2}, \frac{1}{4}, \dots \}$ such that a descent is achieved, cf. step 3 of Algorithm \ref{algo_TDLSMmat}. In all of the presented examples, we ran 500 iterations of the algorithm.
\begin{figure}
    \begin{tabular}{ccccc}
        \begin{minipage}{0.3\textwidth}\centering \includegraphics[width=\textwidth]{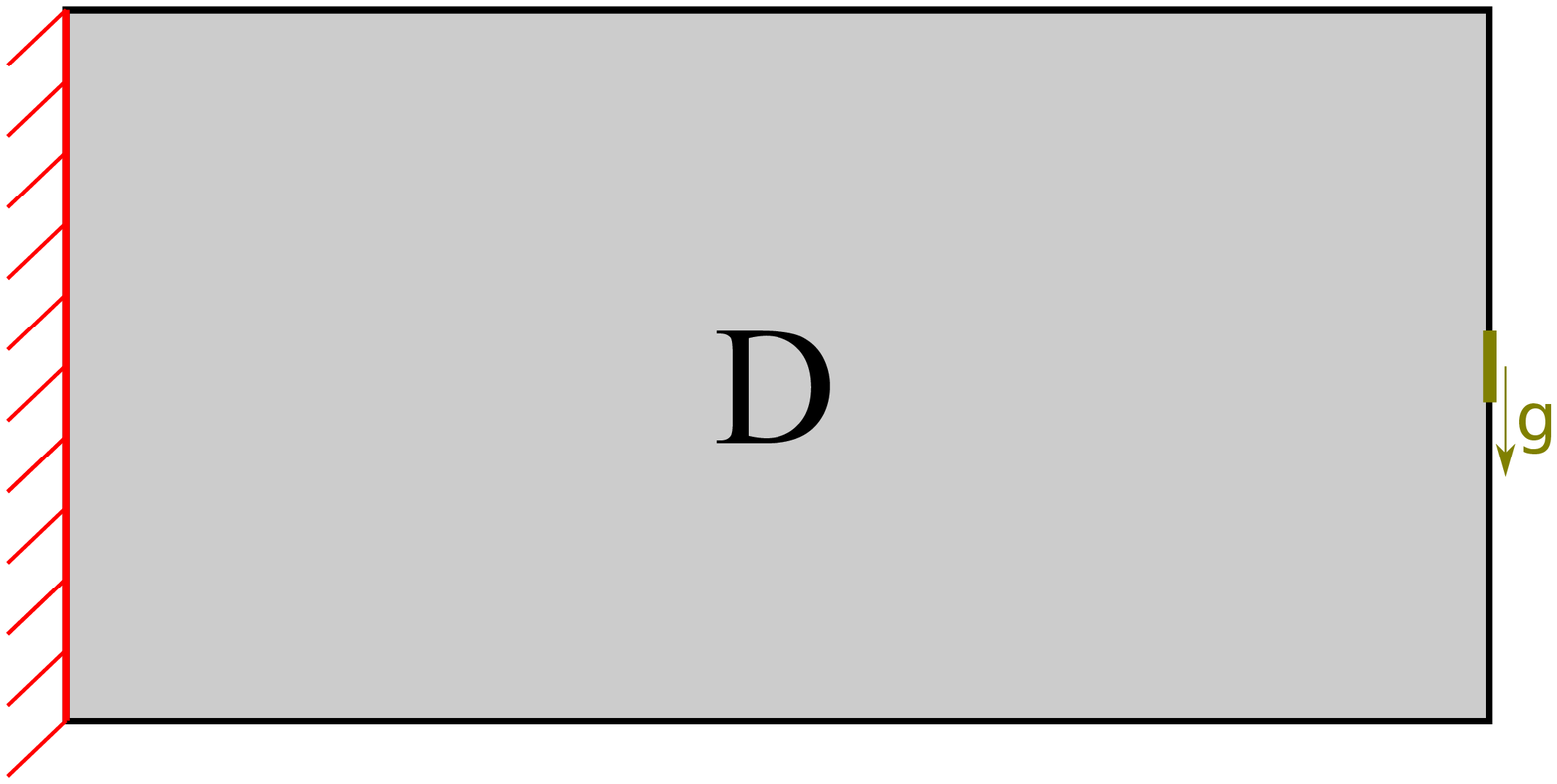} \end{minipage}&&
       \begin{minipage}{0.3\textwidth} \centering \includegraphics[width=\textwidth]{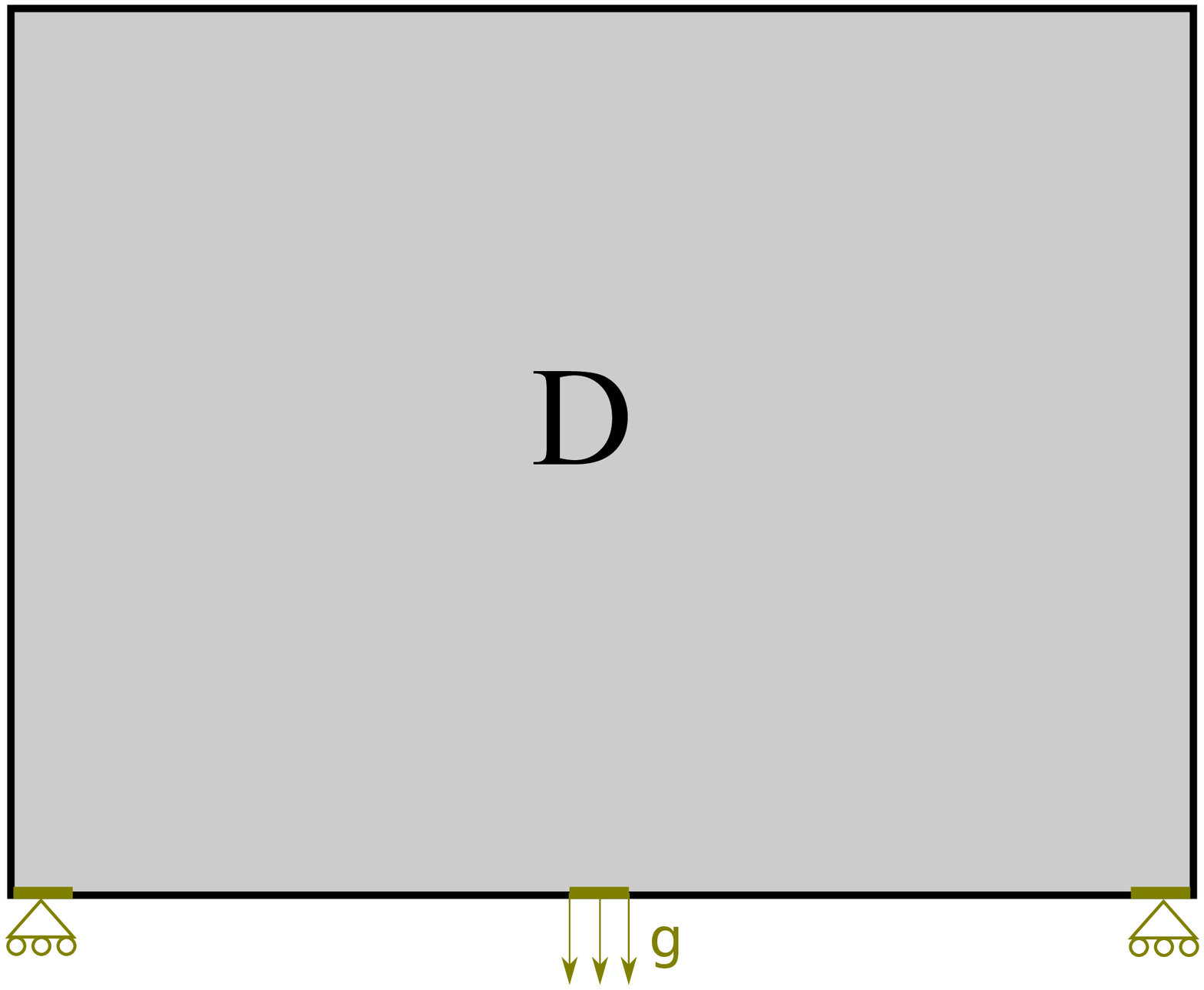} \end{minipage} &&
        \begin{minipage}{0.3\textwidth} \centering \includegraphics[width=\textwidth]{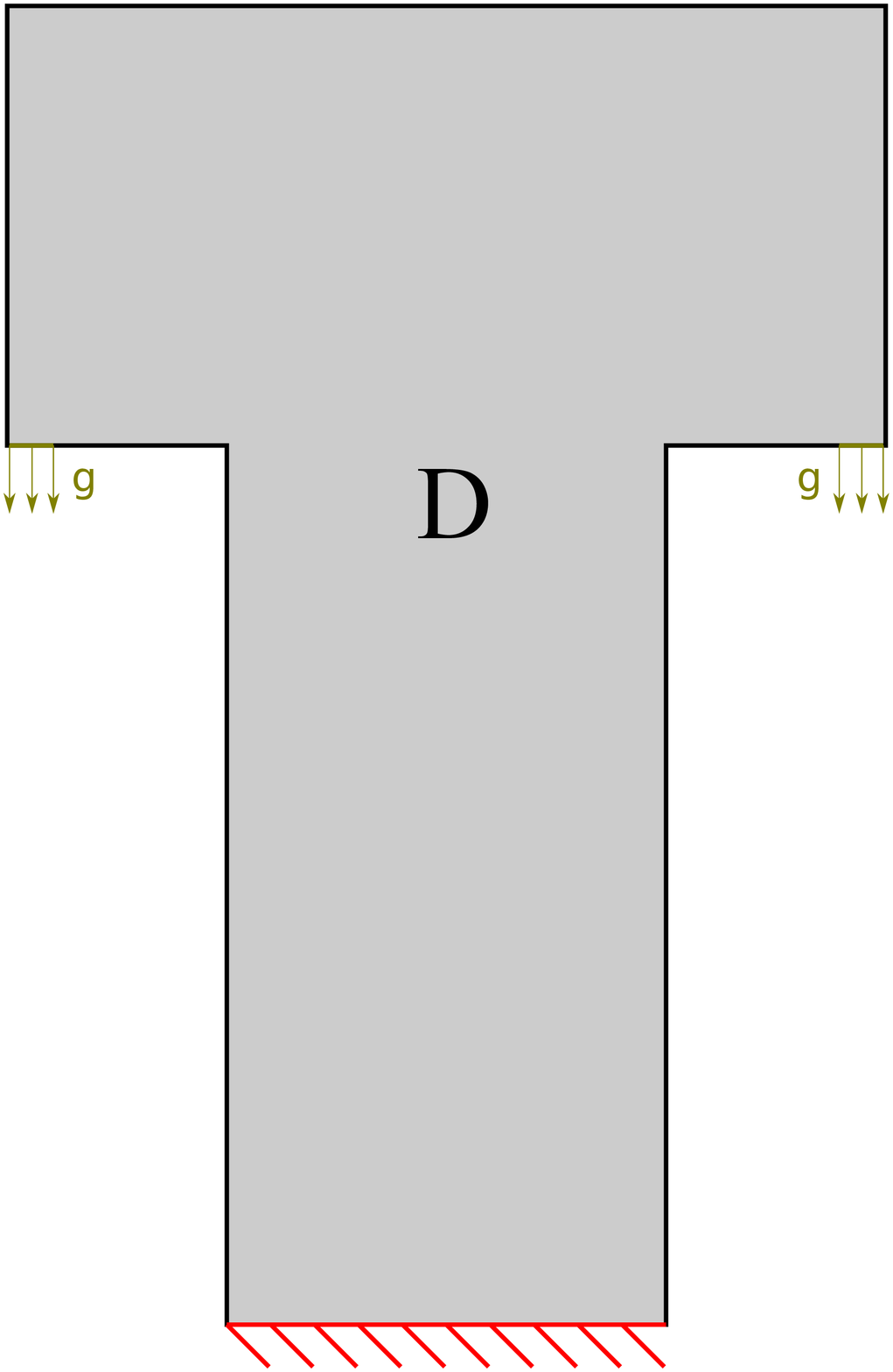} \end{minipage} \\
        (a) && (b) && (c)
    \end{tabular}
    \caption{Settings for three model problems from linear elasticity: (a) long cantilever, (b) bridge, (c) mast.}
    \label{fig_settings}
\end{figure}

\subsubsection{Long Cantilever}
As a first numerical example, we consider a long cantilever where $D = (-1,1) \times (0,1)$, $\Gamma_D = \{-1\} \times (0,1)$, $\Gamma_N = \{1\} \times (0.45,0.55)$ and $g = (0, -1)^\top$.
We used a mesh with 45824 triangles and 23218 vertices and we chose a maximum stepsize $\overline \kappa = 0.12$ in this example.

As an initial design we chose the whole computational domain $D$ to be occupied with strong material. Figure \ref{fig_evo_canti} shows the evolution of the design in the course of Algorithm \ref{algo_TDLSMmat}. 
The objective value $\mathcal J$ \eqref{eq_functionalComp} is reduced from $4.42779$ to $1.7177$ and the angle (in an $L^2(D, \mathbb R^{M-1})$ sense) between the level set function and the generalized topological derivative $G$ defined in \eqref{eq_genTD_mMat} is reduced from $123.5$ degrees to approximately $0.4$ degrees. Figure \ref{fig_graphs_canti} shows the evolution of the objective value, of the compliance value, of the angle and the volumes of the strong and weak materials in the course of the optimization iterations.

\begin{figure}
	\begin{tabular}{cc}
		\includegraphics[width=.5\textwidth]{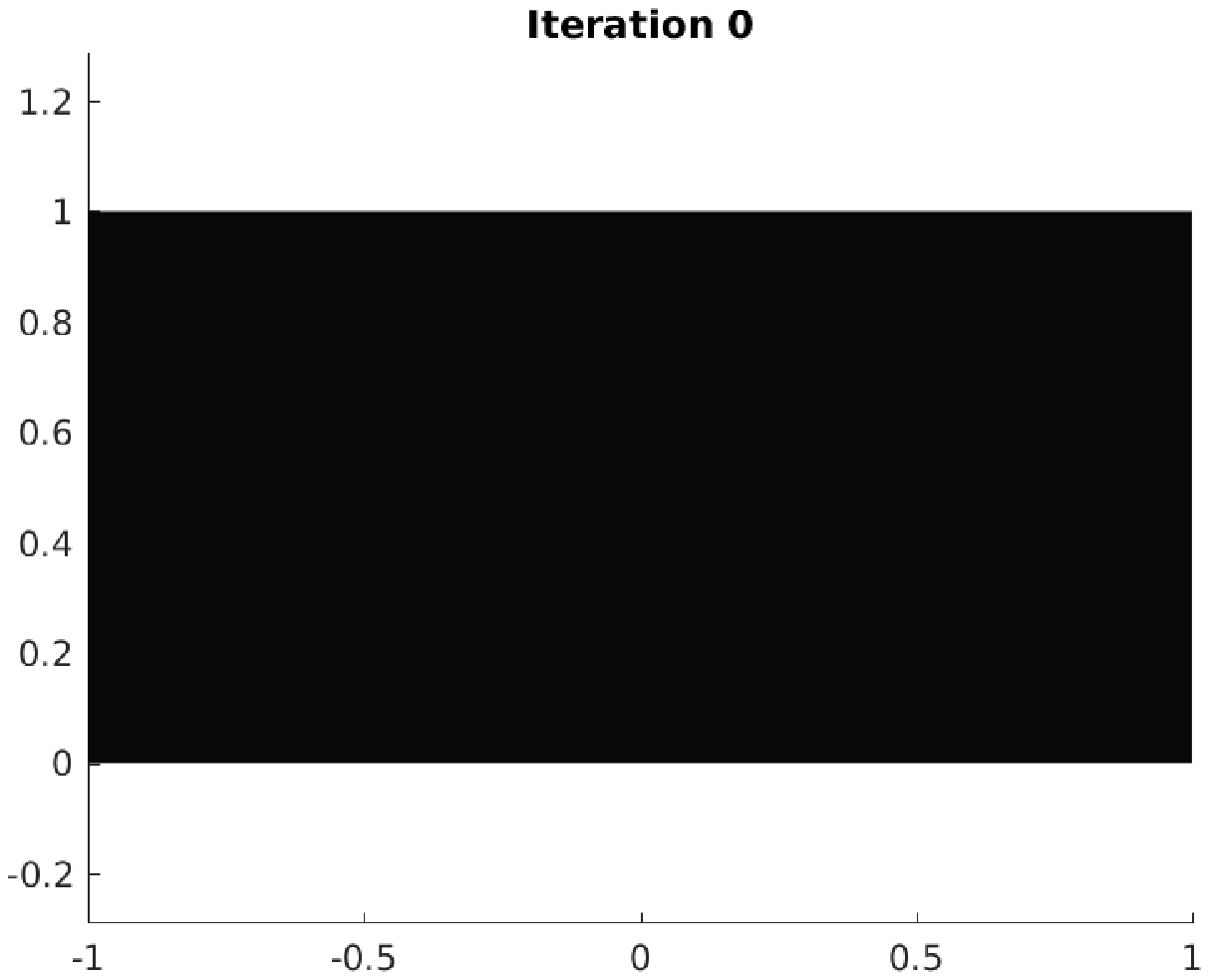}&\includegraphics[width=.5\textwidth]{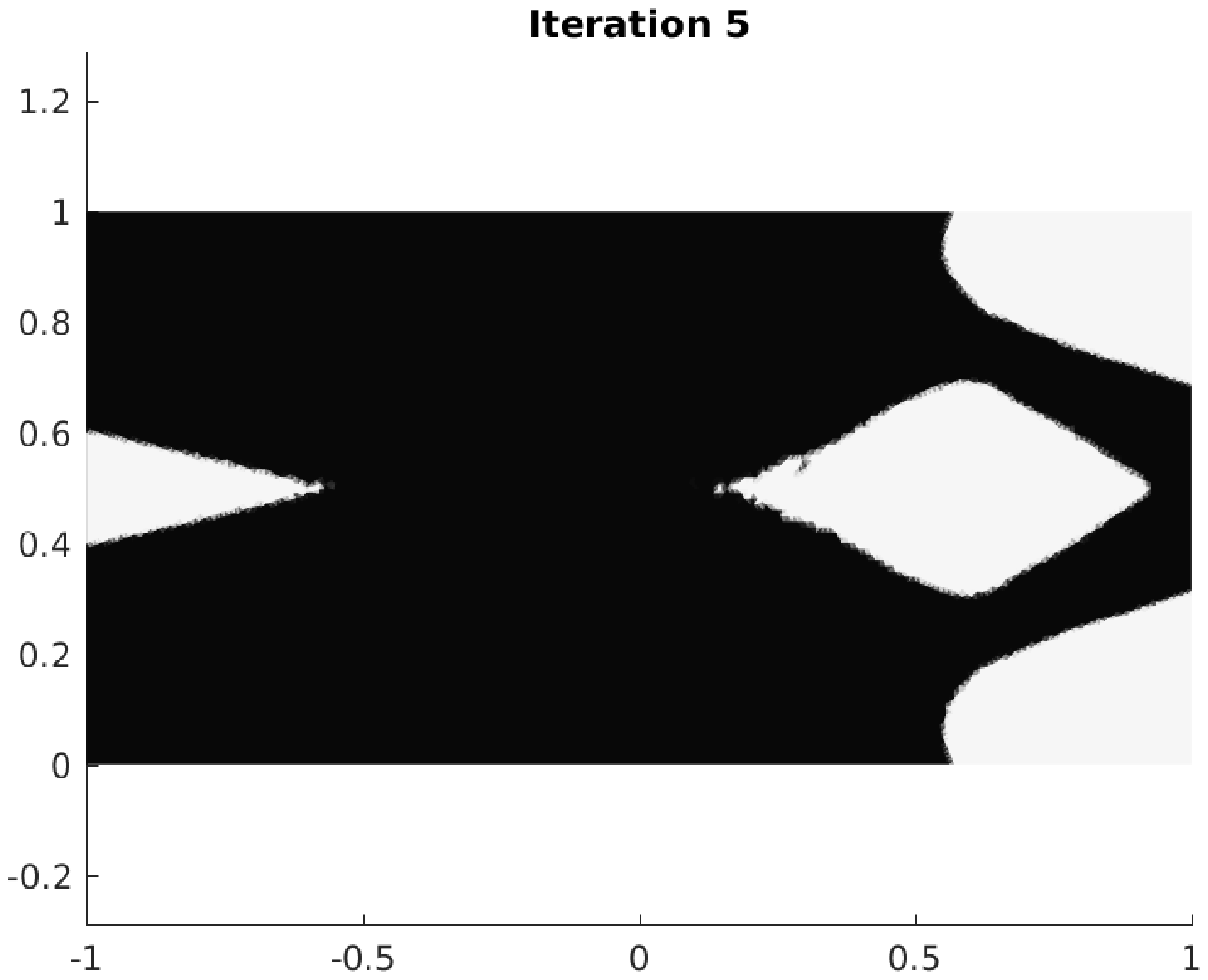} \\
		\includegraphics[width=.5\textwidth]{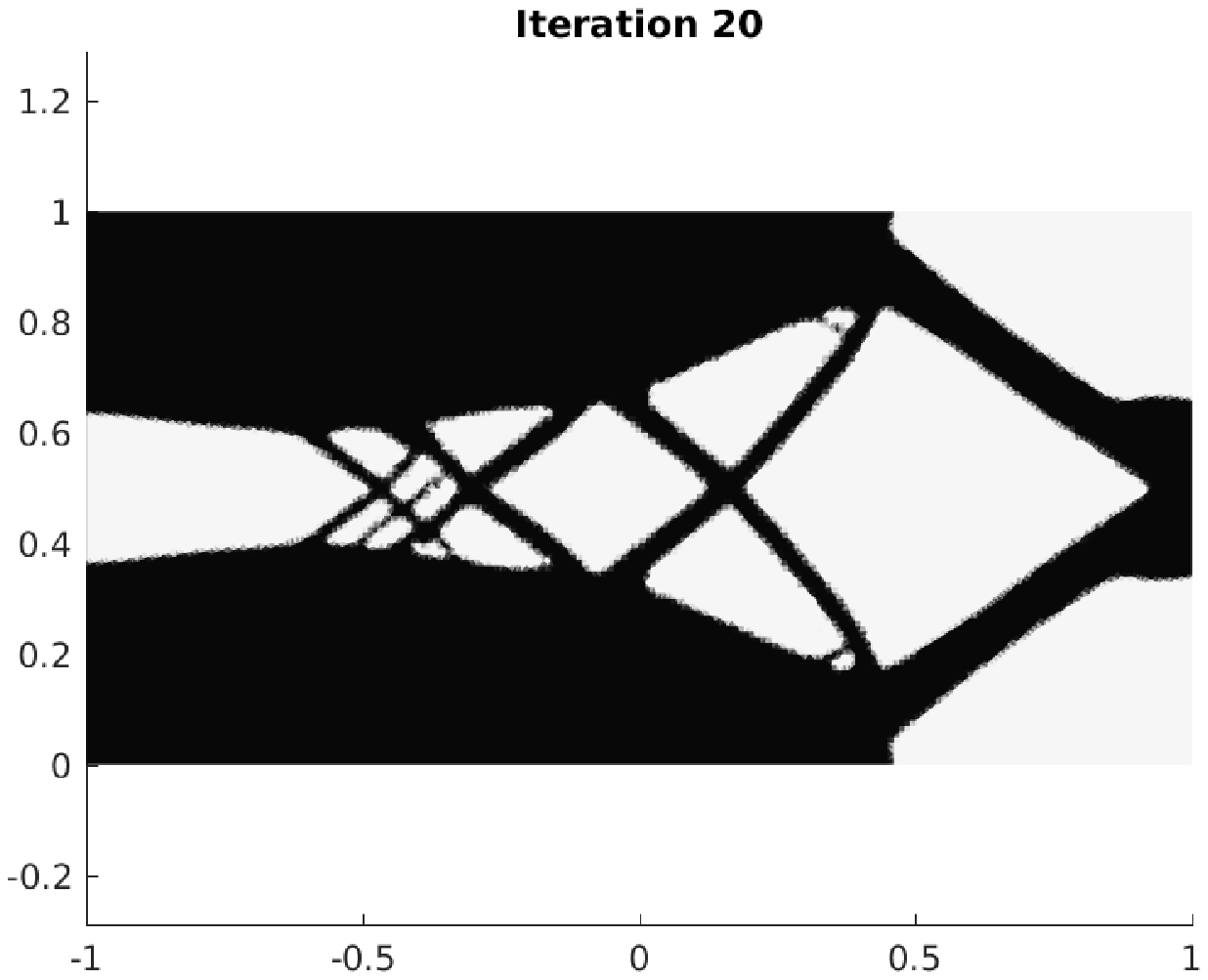}&\includegraphics[width=.5\textwidth]{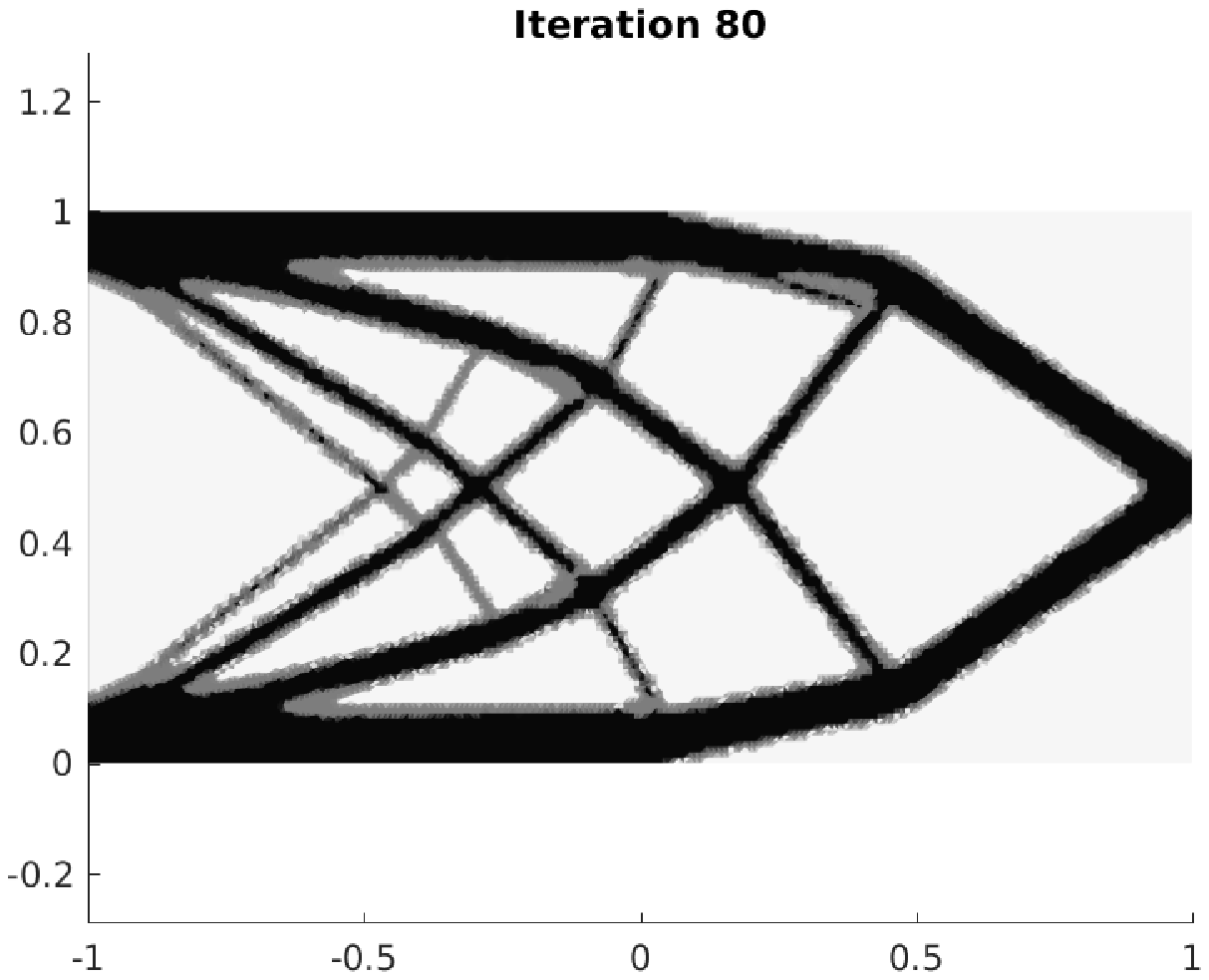} \\
		\includegraphics[width=.5\textwidth]{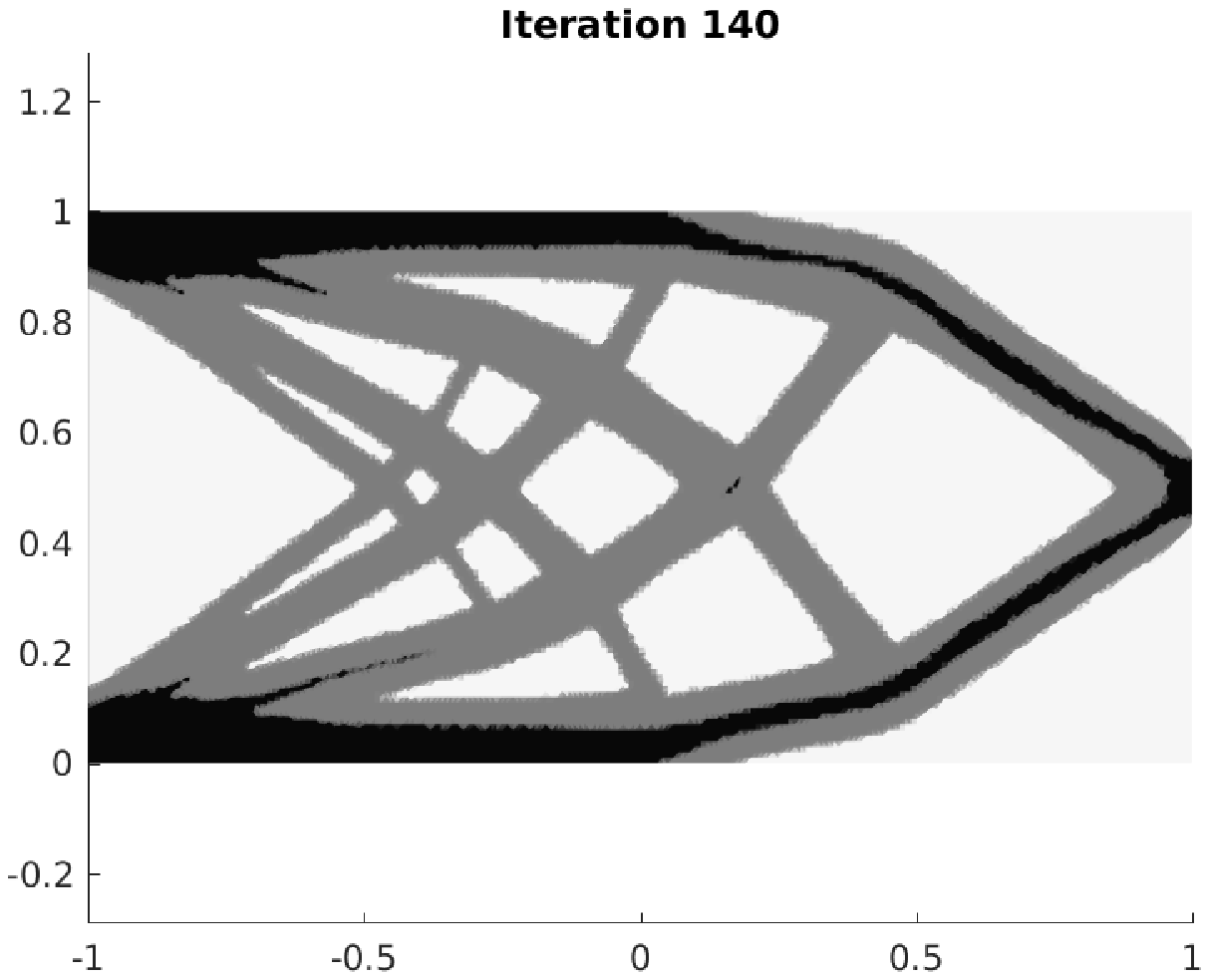}&\includegraphics[width=.5\textwidth]{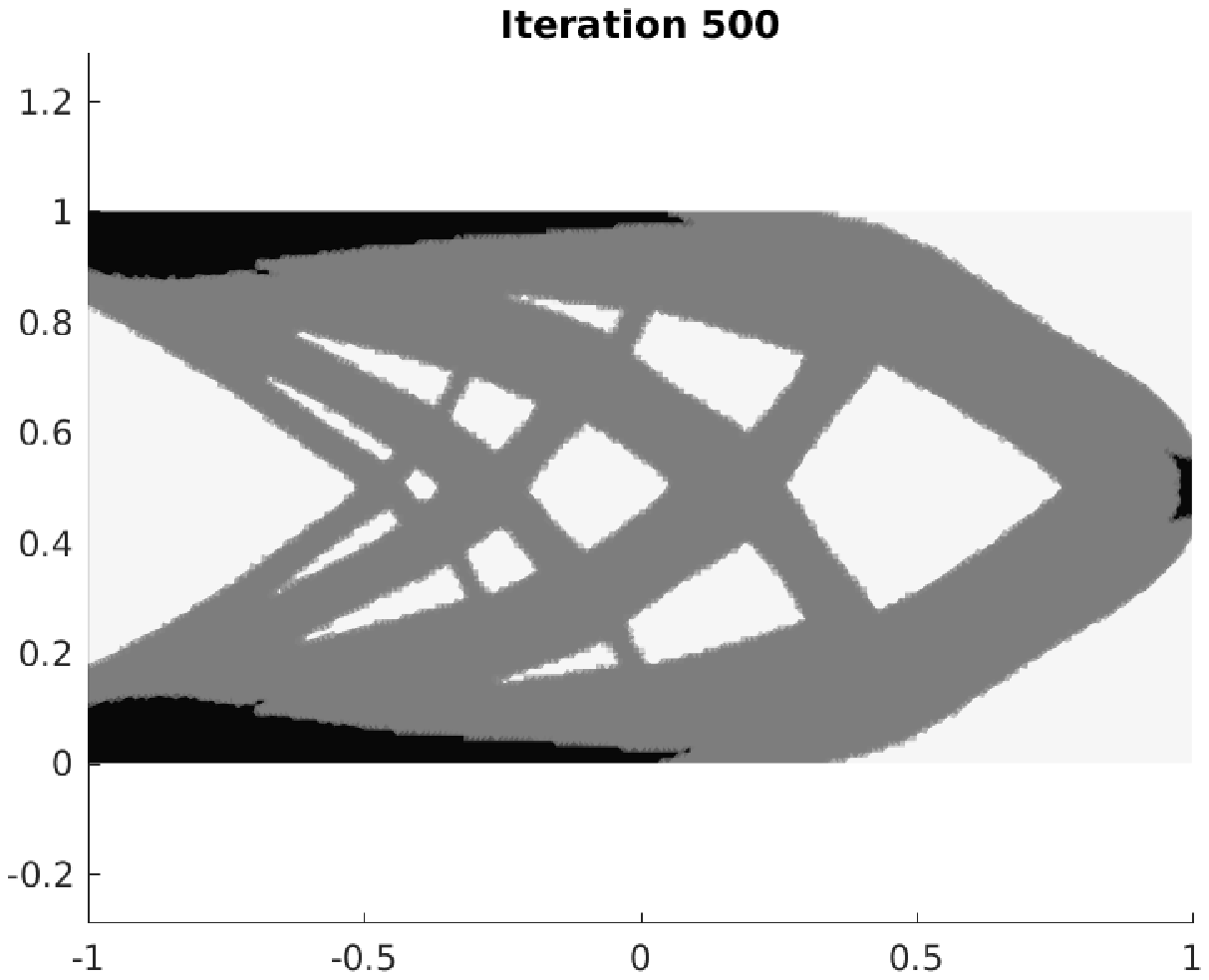} 
	\end{tabular}
	\caption{Evolution of design in the course of multi-material topology optimization by Algorithm \ref{algo_TDLSMmat} for long cantilever example. Black color corresponds to the strong material ($\Omega_1$) and gray color corresponds to the weaker material ($\Omega_2$).}
	\label{fig_evo_canti}
\end{figure}

\begin{figure}
    \hspace{-10mm}
	\begin{tabular}{cccc}
		\includegraphics[width=.25\textwidth]{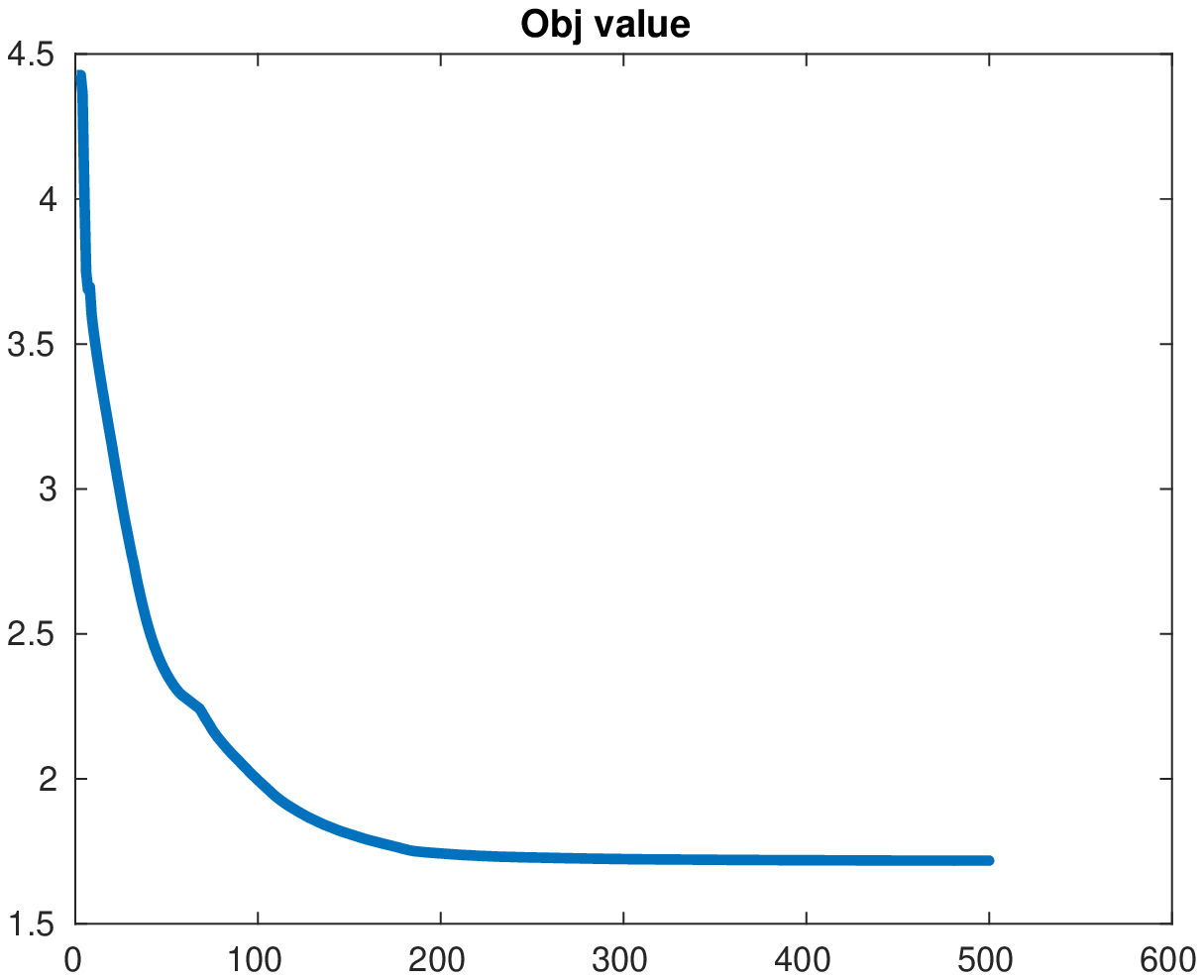}&\includegraphics[width=.25\textwidth]{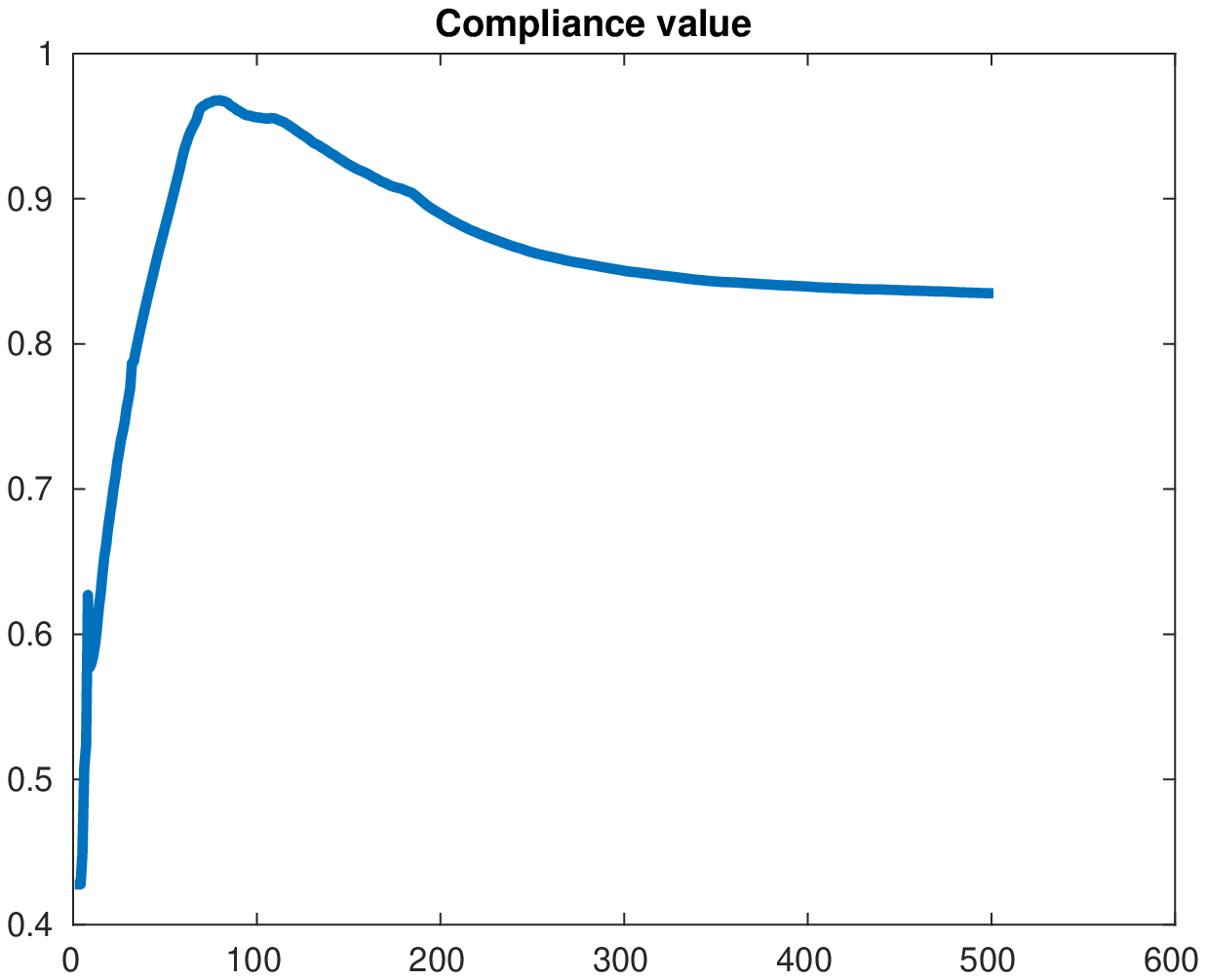} &
		\includegraphics[width=.25\textwidth]{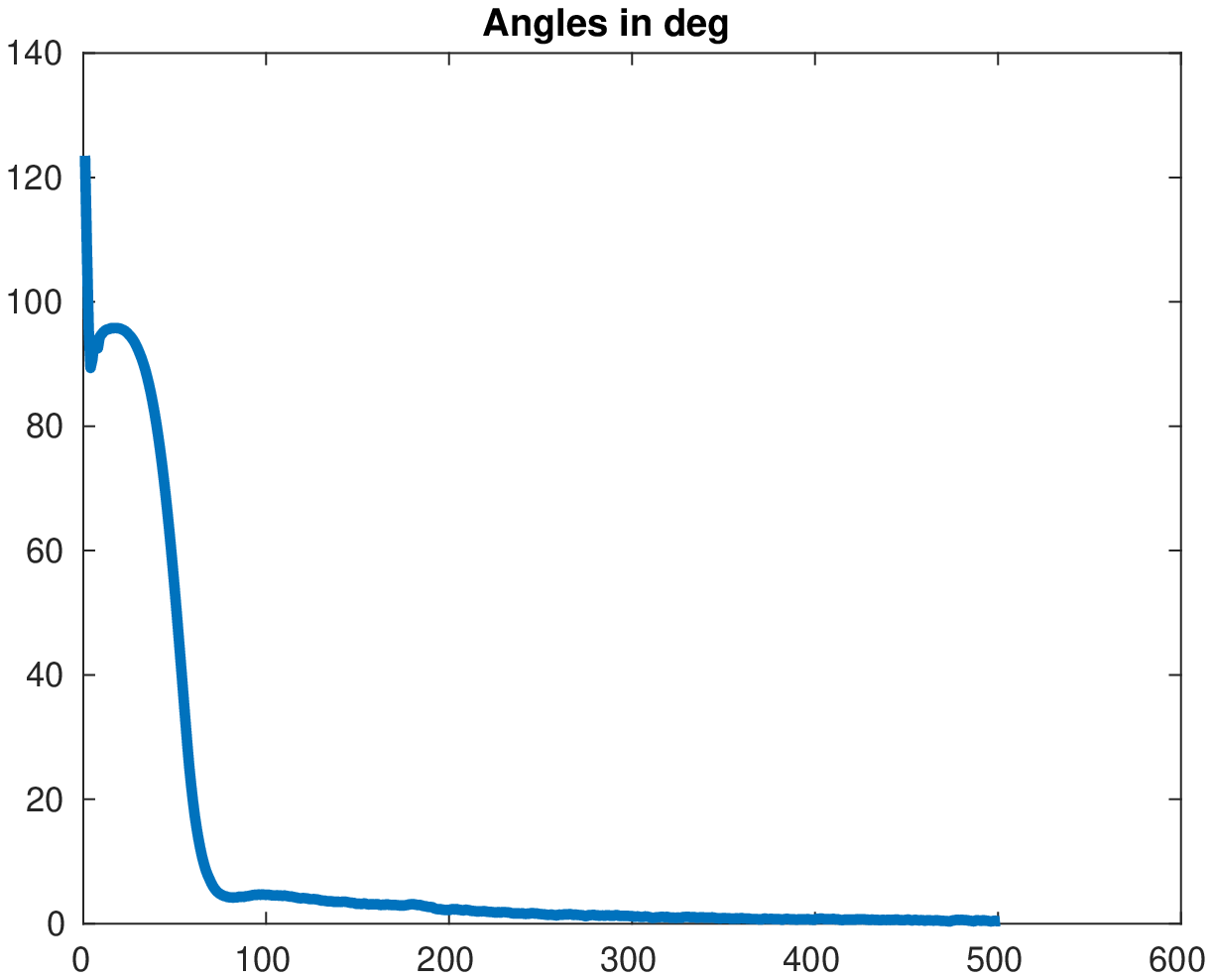}&\includegraphics[width=.25\textwidth]{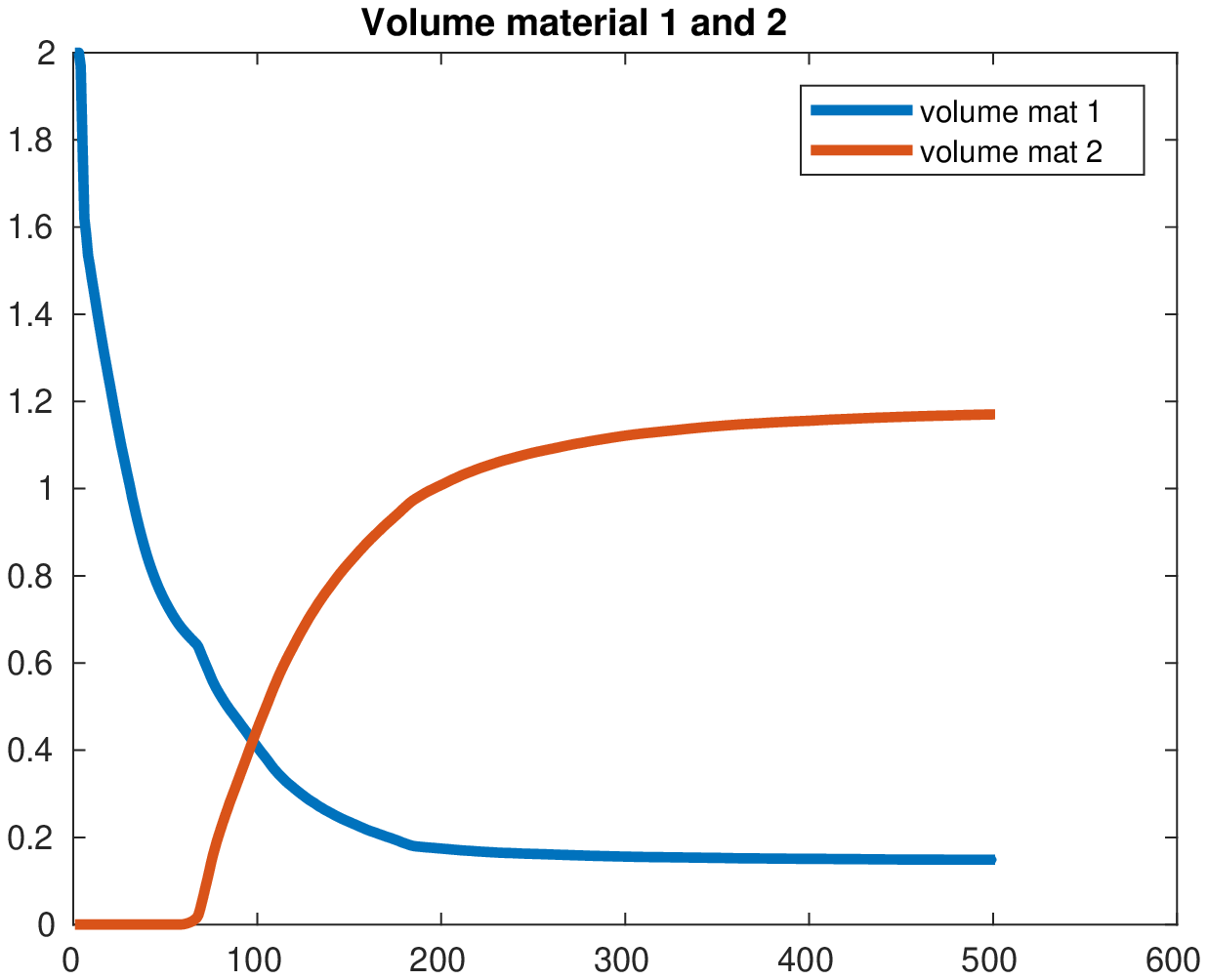} \\
		(a) & (b) & (c) & (d)
	\end{tabular}
	\caption{Evolution of different quantities in the course of Algorithm \ref{algo_TDLSMmat} for optimization of long cantilever. (a) Objective value $\mathcal J$ \eqref{eq_functionalComp}. (b) Compliance value $\mathcal C$. (c) Angle $\theta$ between level set function $\psi$ and generalized topological derivative $G$ in $L^2(D,\mathbb R^{M-1})$ sense. (d) Volumes of strong and weak materials.}
	\label{fig_graphs_canti}
\end{figure}

\subsubsection{Bridge}
In this example, we search for the optimal distribution of three materials (strong, weak and void) within the design region $D = (-1,1)\times (0, 1.5)$ as depicted in Figure \ref{fig_settings}(b). The structure is subject to a load $g = (0,-1)^\top$ acting on the bottom in the center, $\Gamma_N = (0.95, 1.05)\times \{0\}$. Moreover, the second component of the displacement vector $u$ is bound to vanish on the left and right bottom regions $(-1, -0.9) \times \{0\}$ and $(0.9, 1) \times \{0\}$. We used a grid consisting of 55144 triangles and 27878 vertices and chose the maximum allowed step size as $\overline \kappa = 0.2$. The evolution of the design is shown in Figure \ref{fig_evo_bridge}. The objective value $\mathcal J$ \eqref{eq_functionalComp} is reduced from $6.04865$ to $0.646332$ and the angle from $126.8$ degrees to $4.467 \cdot 10^{-5}$ degrees, see Figure \ref{fig_graphs_bridge}.

\begin{figure}
	\begin{tabular}{cc}
		\includegraphics[width=.5\textwidth]{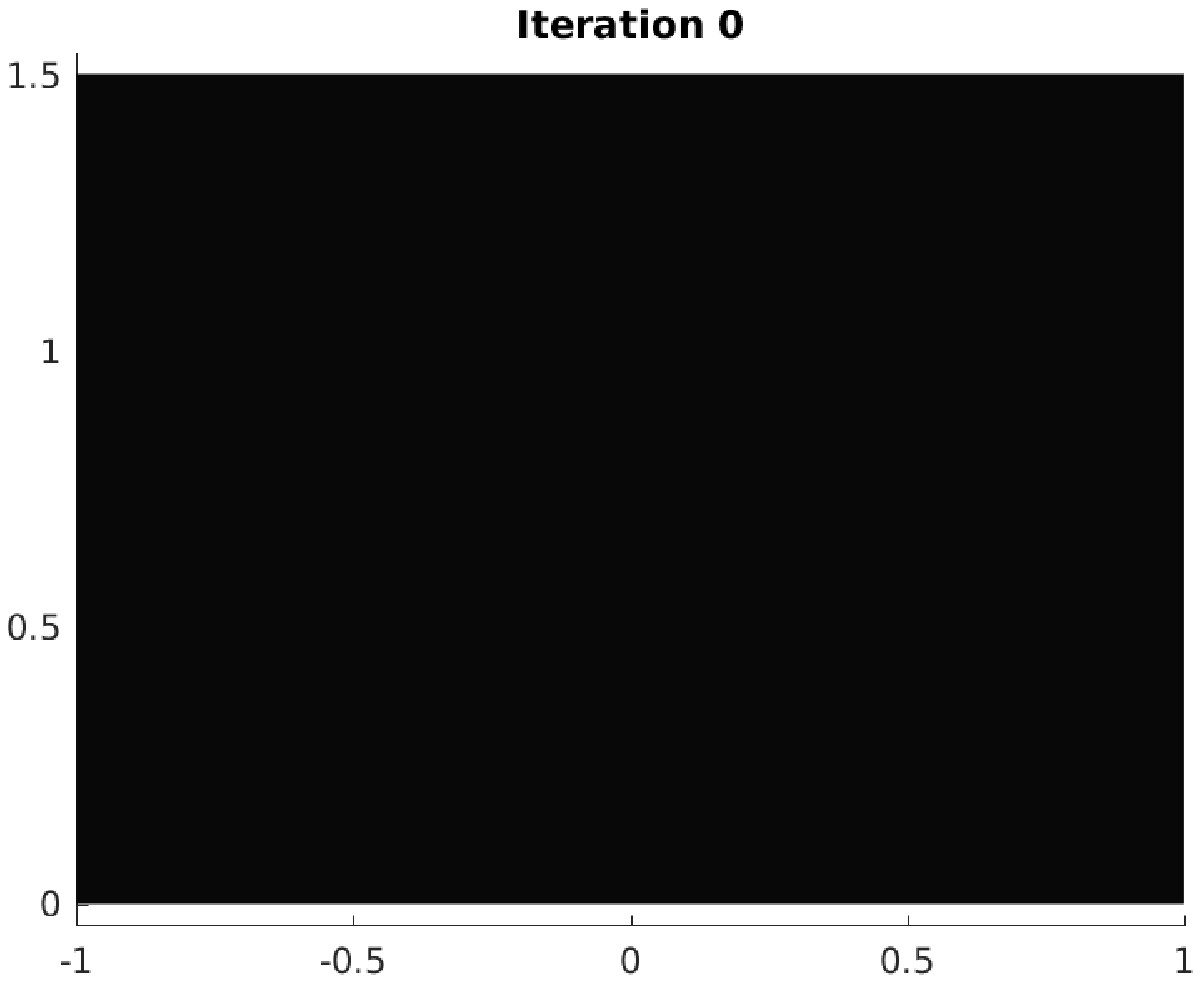}&\includegraphics[width=.5\textwidth]{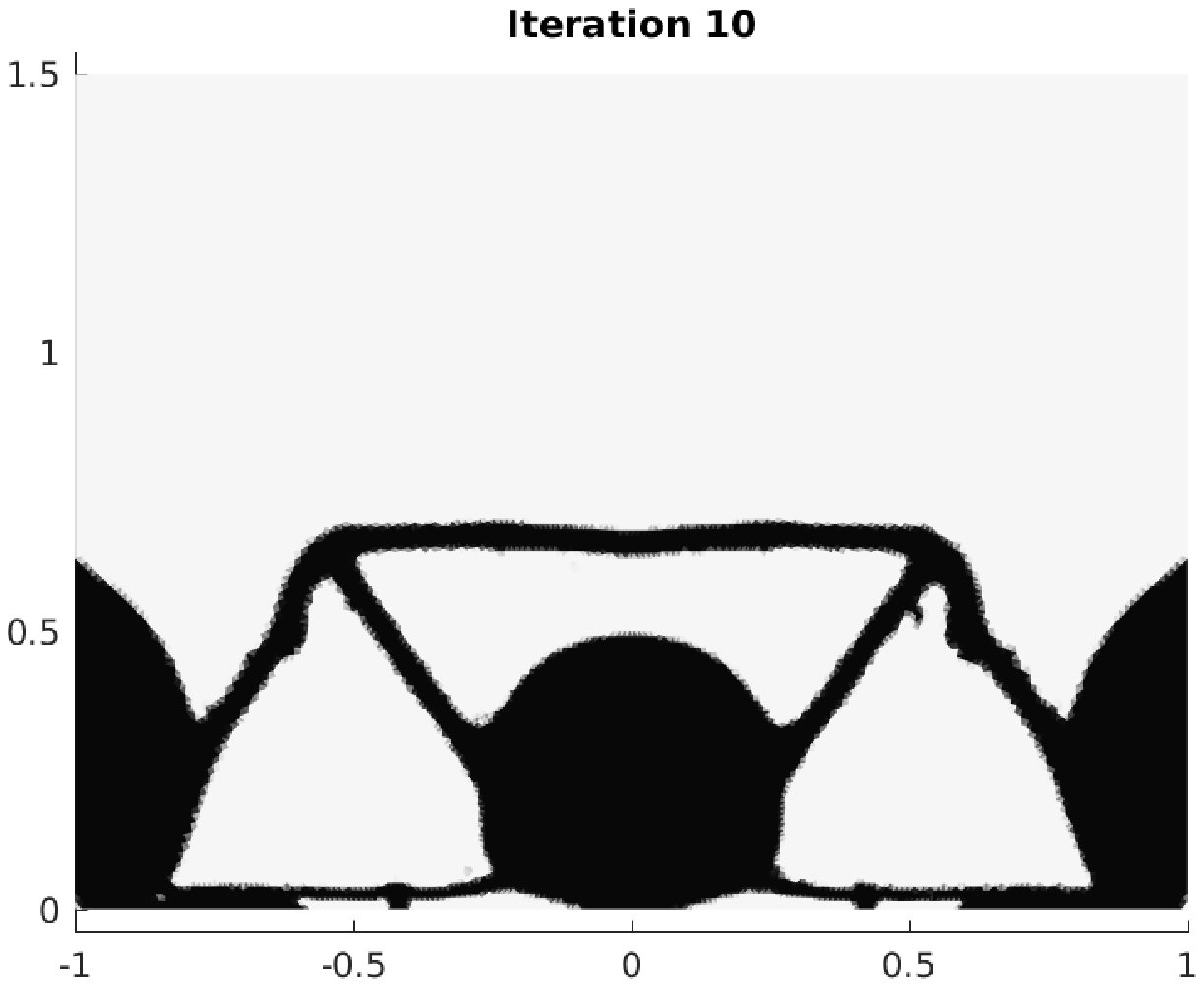} \\
		\includegraphics[width=.5\textwidth]{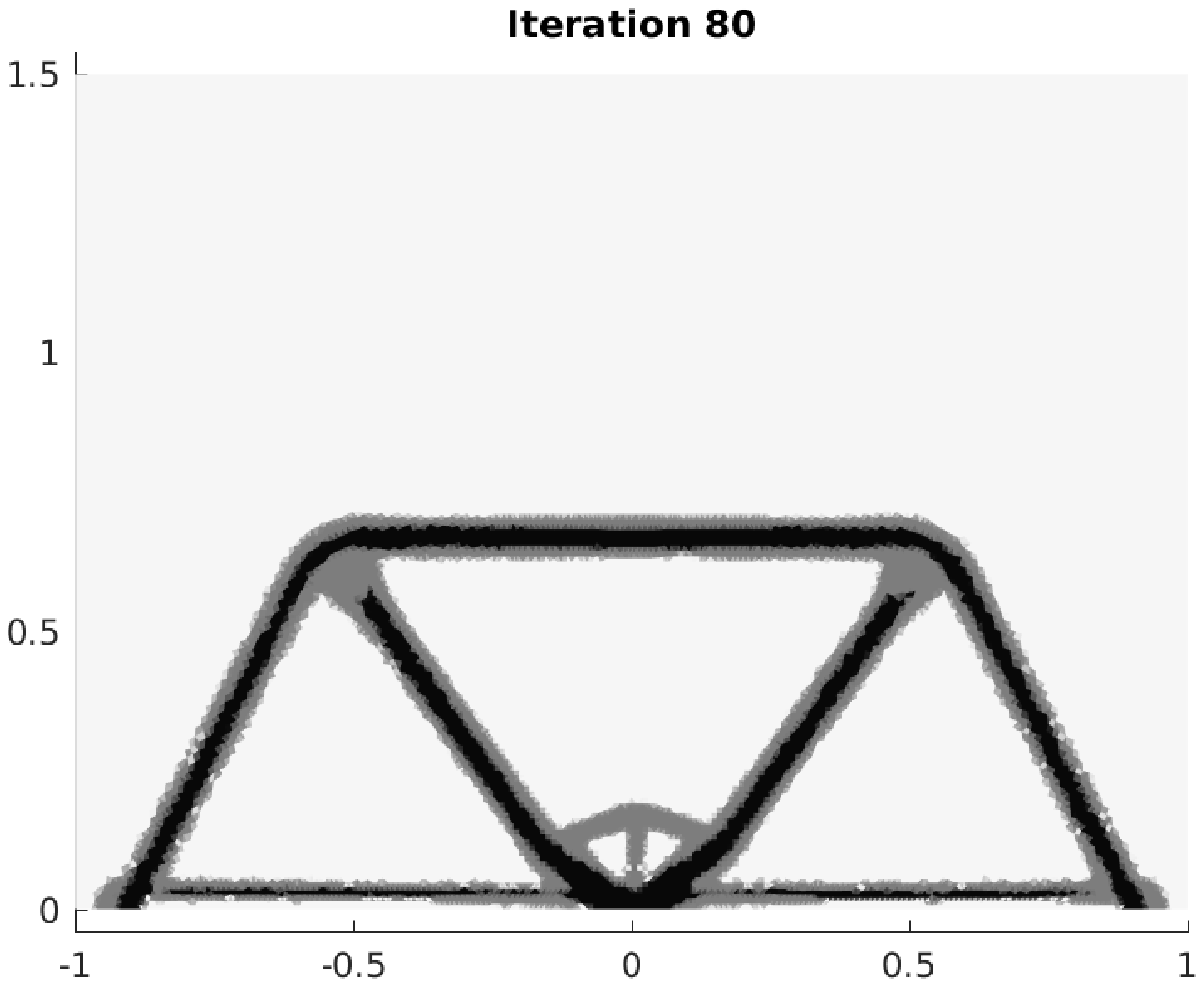}&\includegraphics[width=.5\textwidth]{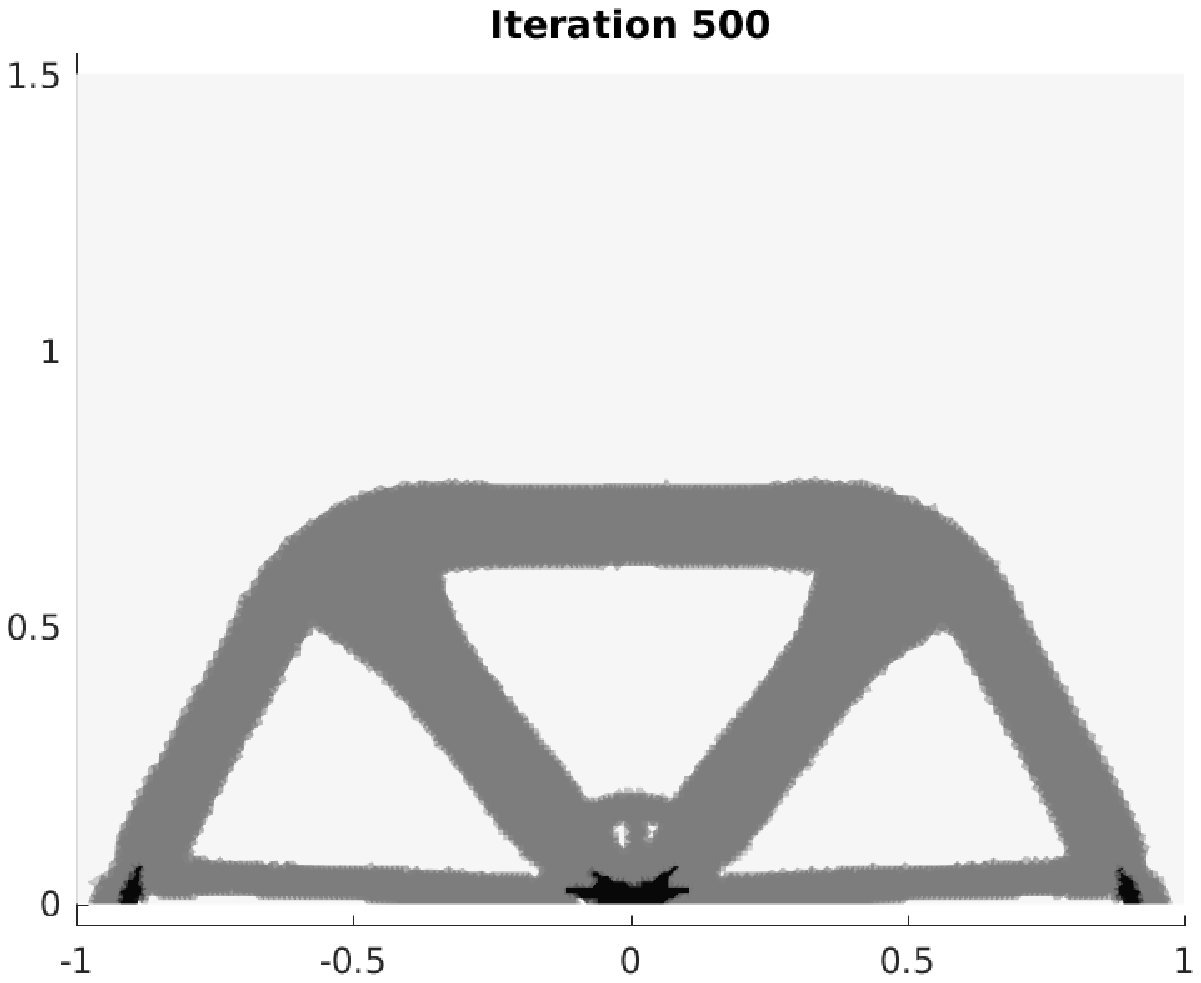} 
	\end{tabular}
	\caption{Evolution of design in the course of Algorithm \ref{algo_TDLSMmat} for bridge example. Black color corresponds to the strong material ($\Omega_1$) and gray color corresponds to the weaker material~($\Omega_2$).}
	\label{fig_evo_bridge}
\end{figure}

\begin{figure}
    \begin{center}
	\begin{tabular}{ccc}
		\includegraphics[width=.4\textwidth]{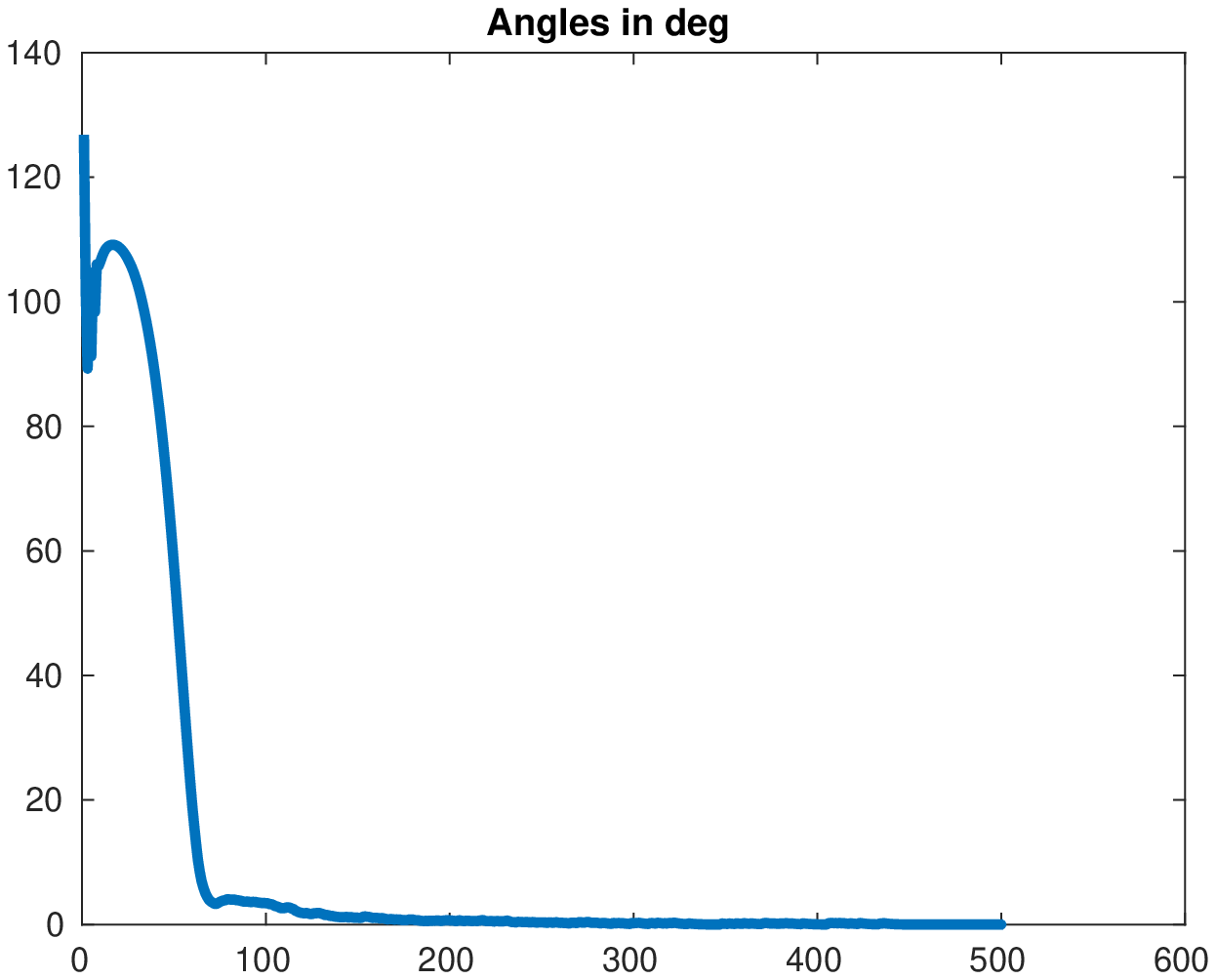}&&\includegraphics[width=.4\textwidth]{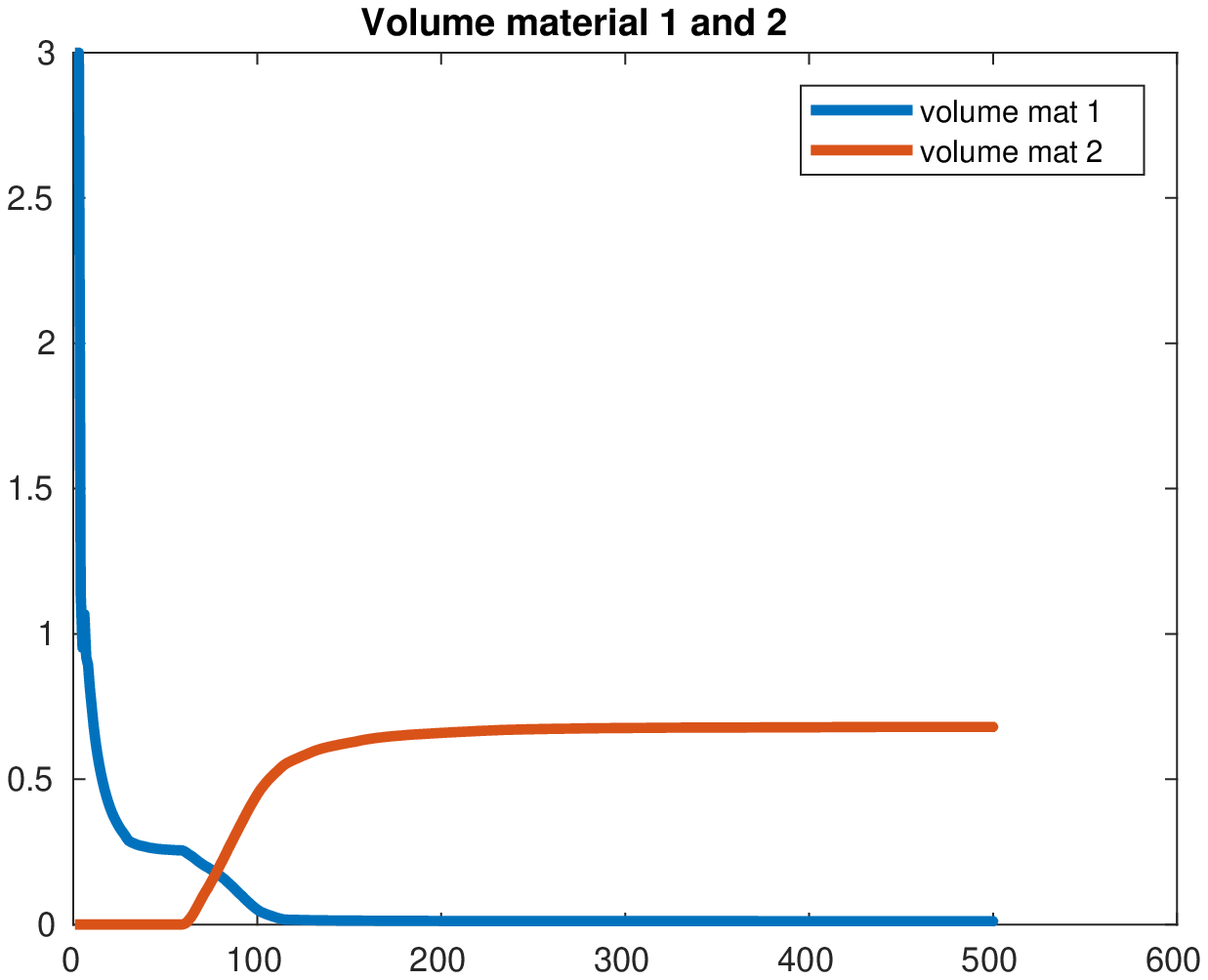} \\
		(a) & & (b)
	\end{tabular}
    \end{center}
	\caption{Evolution of angle and volumes in the course of Algorithm \ref{algo_TDLSMmat} applied to optimization of a bridge. (a) Angle between level set function and generalized topological derivative. (b) Volumes of strong material (material 1) and weak material (material 2).}
	\label{fig_graphs_bridge}
\end{figure}

\subsubsection{Mast}
In this example, we consider the computational domain shown in Figure \ref{fig_settings}(c), which we decomposed into a mesh of 50523 triangles and 25638 vertices. The structure is fixed on the bottom, $\Gamma_D = (-0.5, 0.5)\times \{0\}$, and is subject to a vertical load on the left and right parts, $\Gamma_N = (-1,-0.9) \times \{2\} \cup (0.9, 1) \times \{2\}$. We used a maximum step size $\overline \kappa = 0.1$. The evolution of the design in the course of Algorithm \ref{algo_TDLSMmat} is depicted in Figure \ref{fig_evo_mast}. The objective value is reduced from $8.22603$ to $1.80394$ and the angle $\theta$ from approximately $127$ degrees to approximately $0.46$ degrees, see also Figure \ref{fig_graphs_mast}.

\begin{figure}
	\begin{tabular}{cc}
		\includegraphics[width=.5\textwidth]{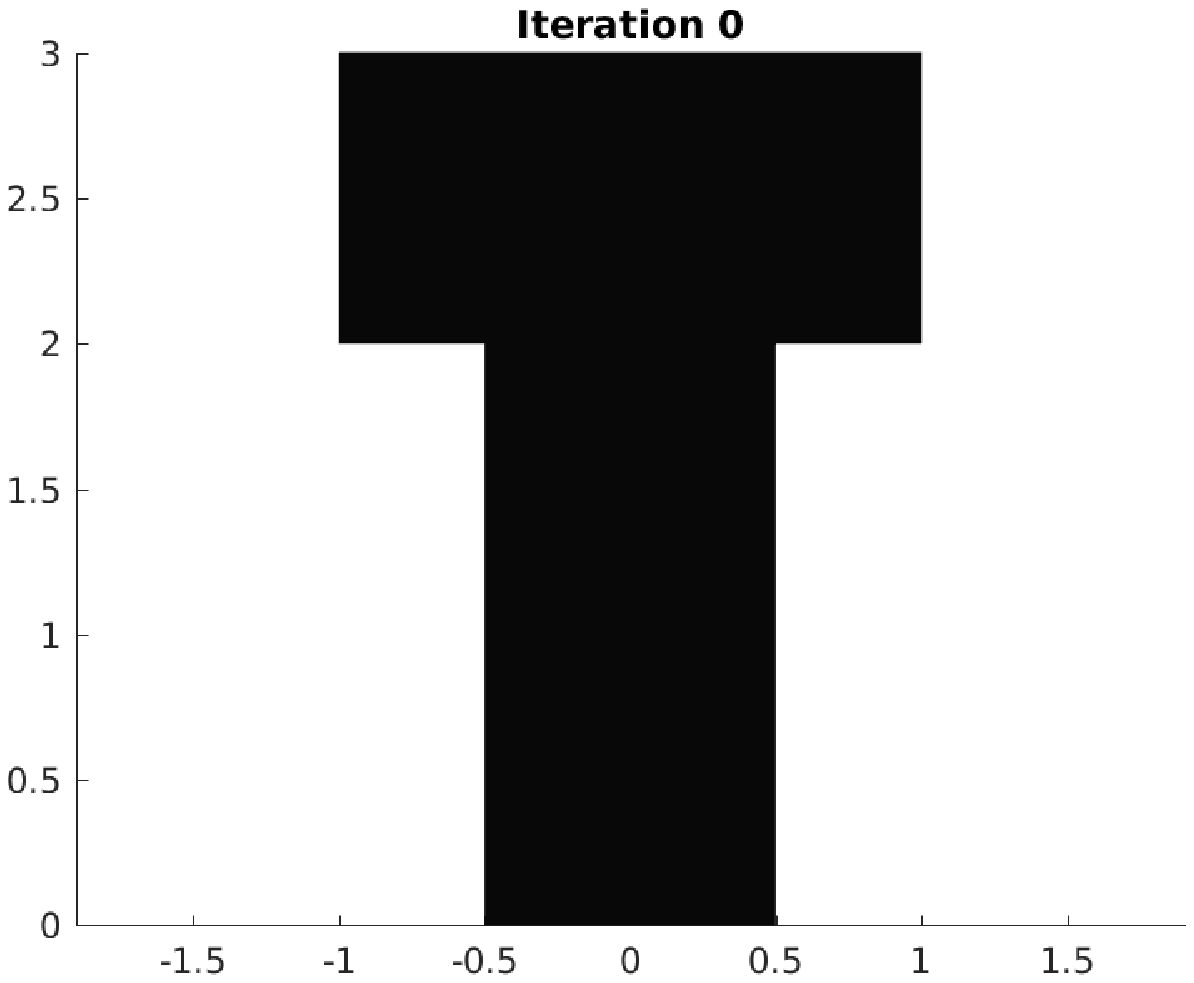}&\includegraphics[width=.5\textwidth]{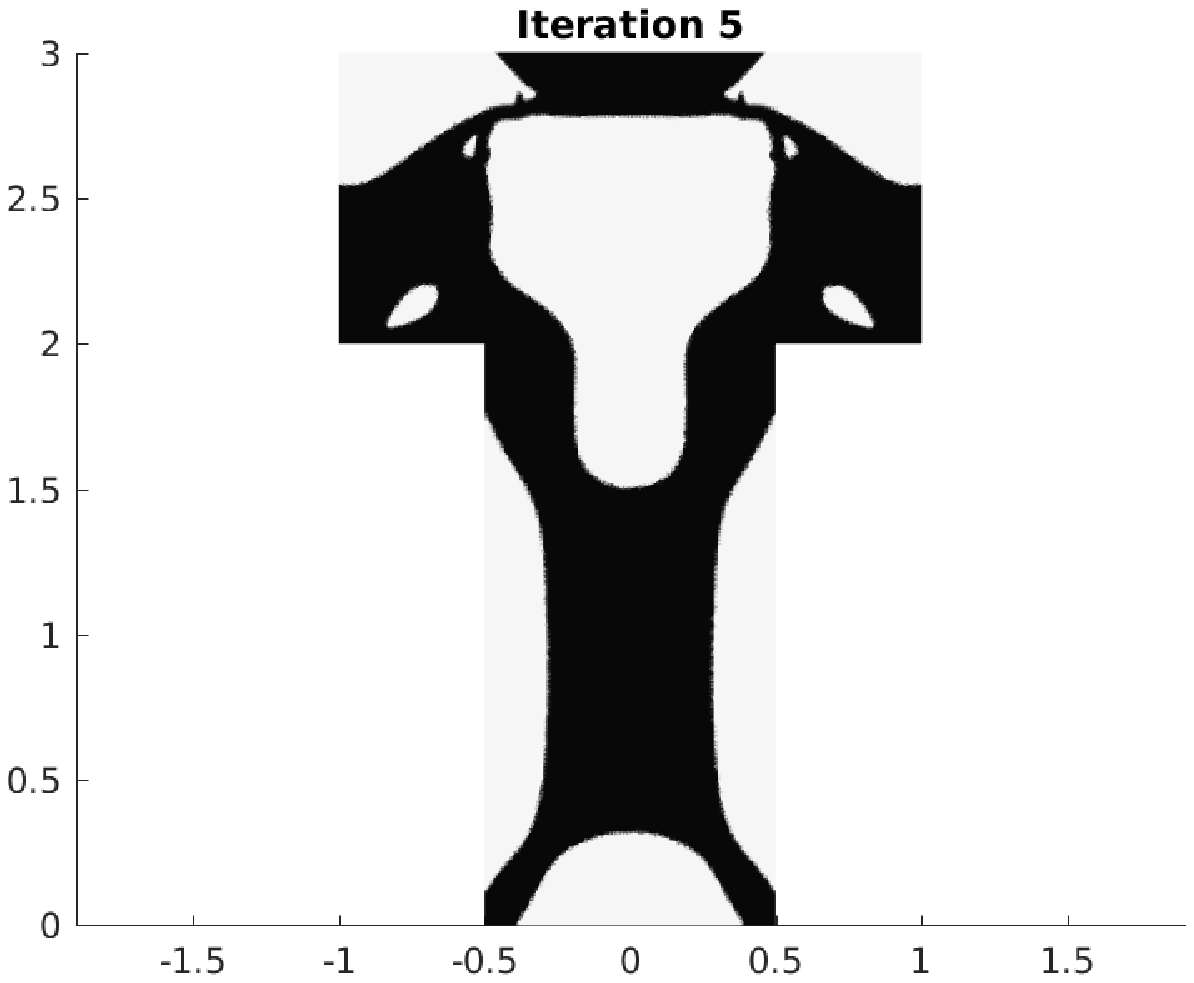} \\
		\includegraphics[width=.5\textwidth]{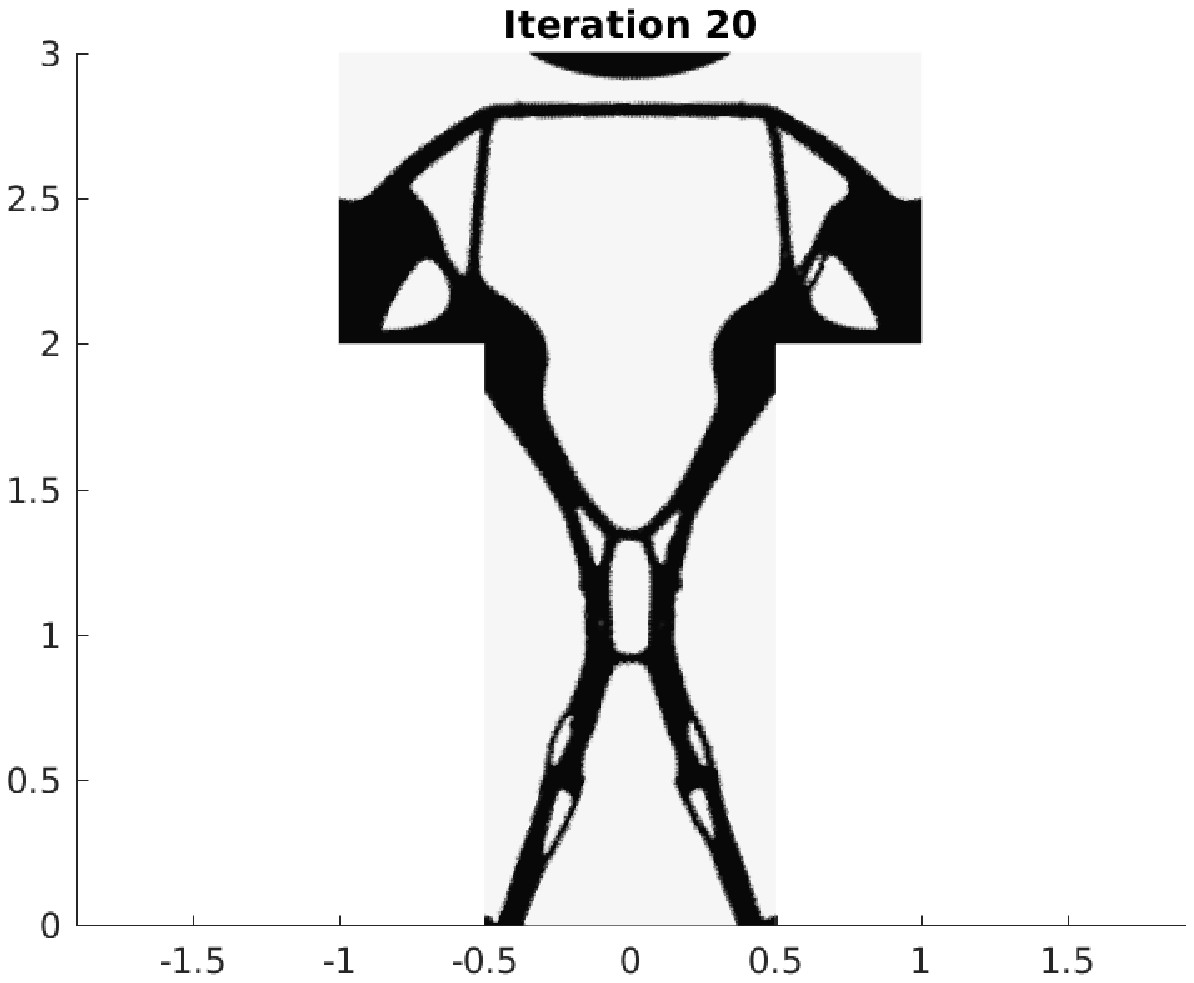}&\includegraphics[width=.5\textwidth]{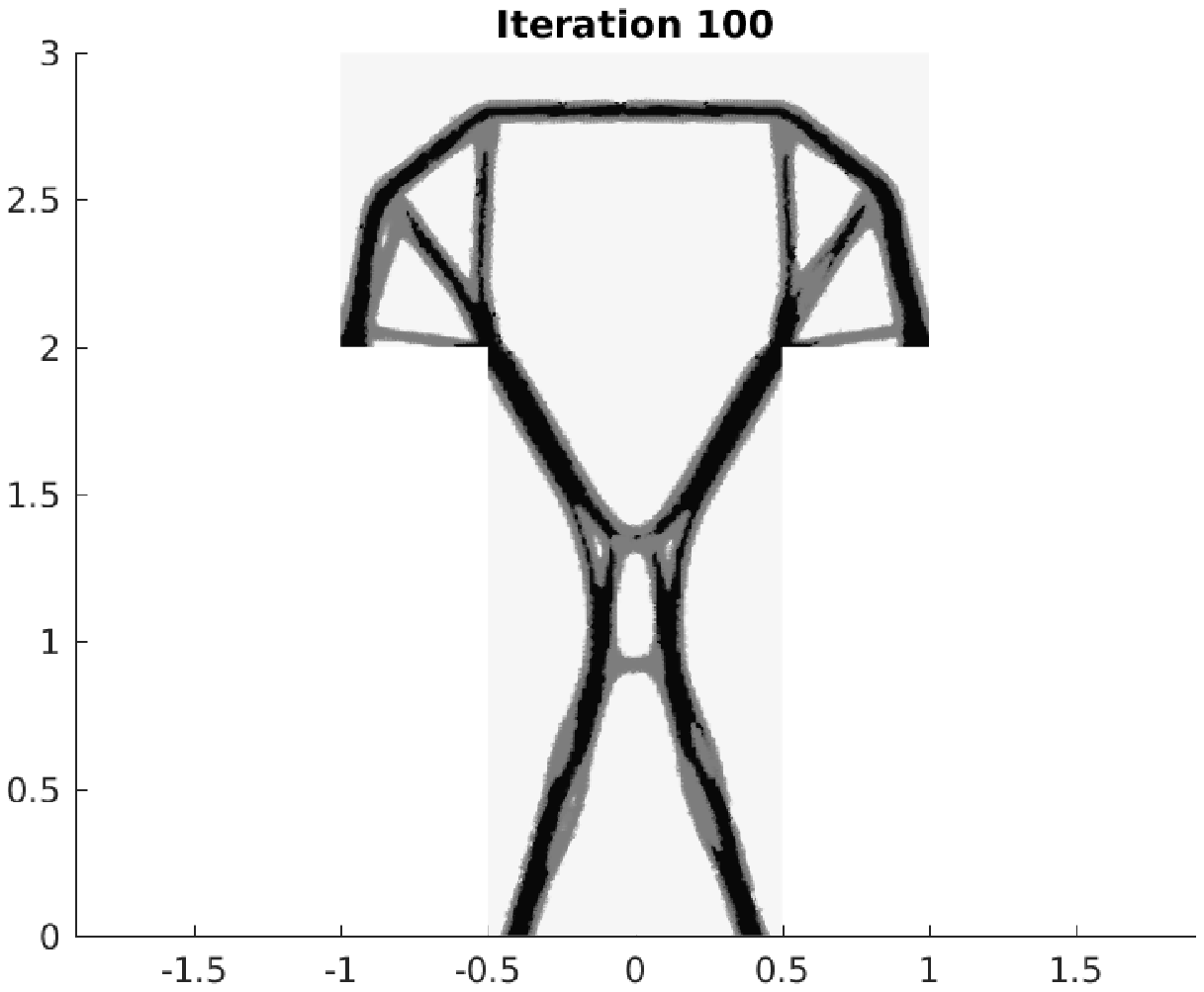} \\
		\includegraphics[width=.5\textwidth]{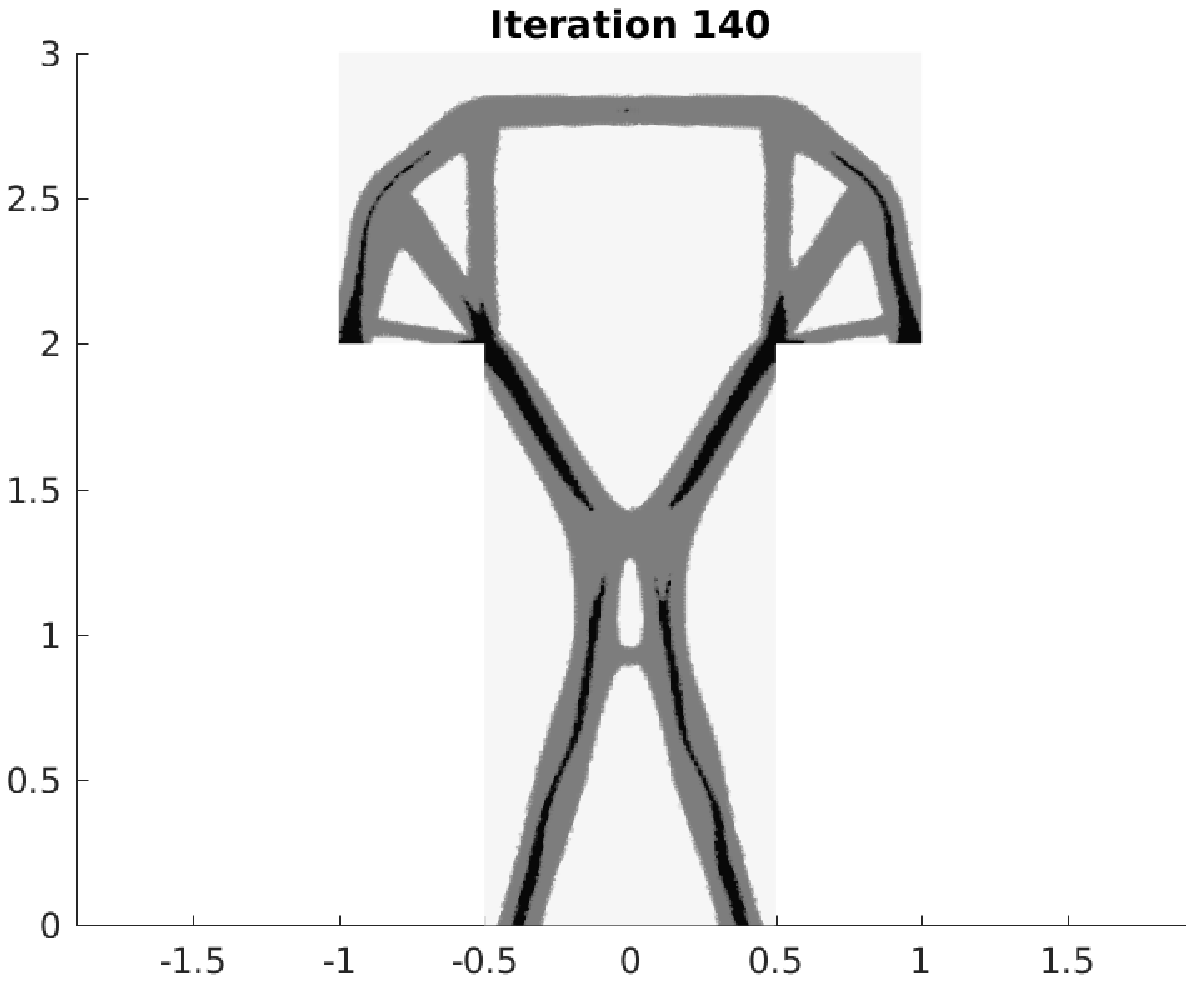}&\includegraphics[width=.5\textwidth]{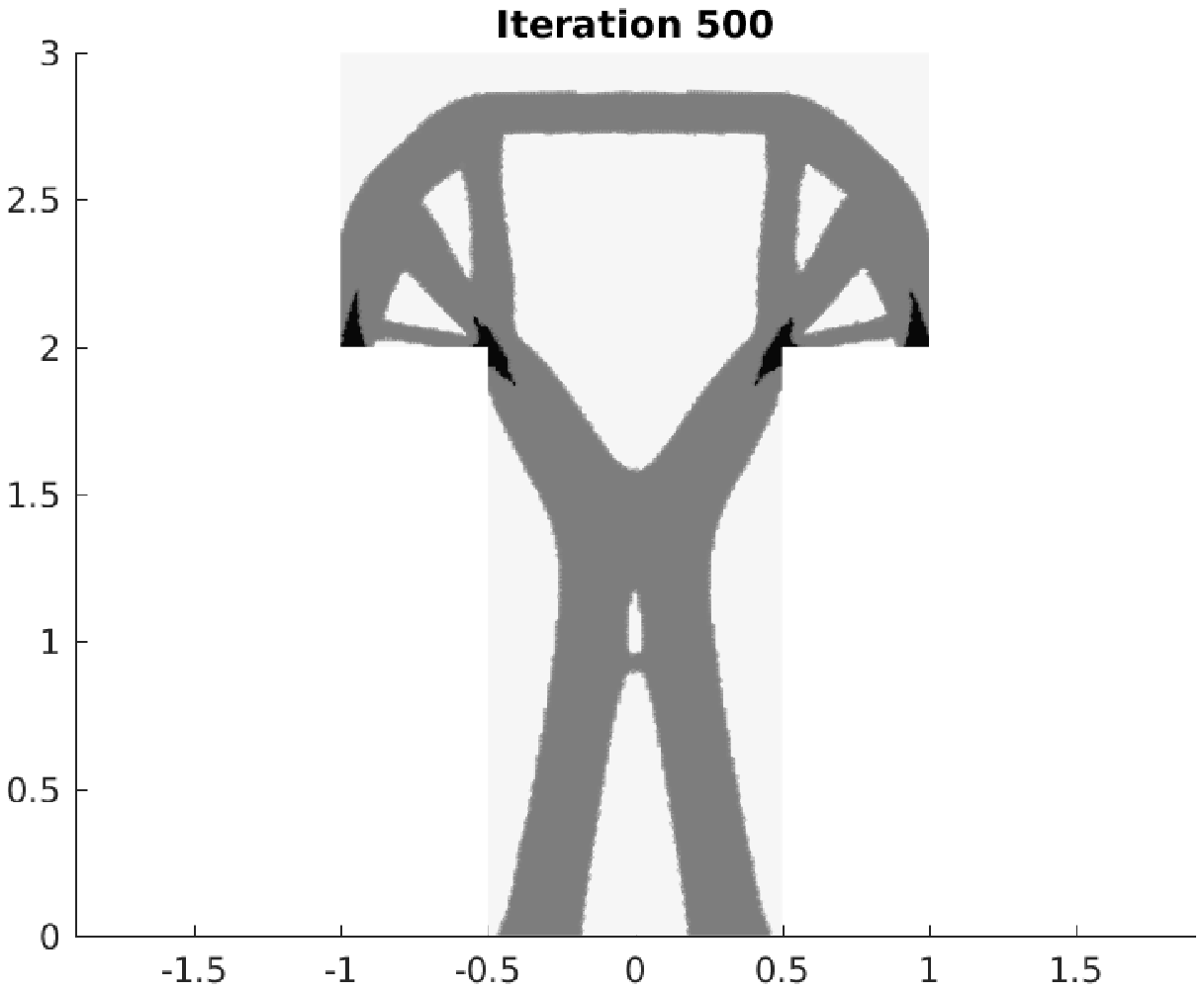} 
	\end{tabular}
	\caption{Evolution of design in the course of Algorithm \ref{algo_TDLSMmat} for mast example. Black color corresponds to the strong material ($\Omega_1$) and gray color corresponds to the weaker material~($\Omega_2$).}
	\label{fig_evo_mast}
\end{figure}

\begin{figure}
    \begin{center}
	\begin{tabular}{cc}
		\includegraphics[width=.4\textwidth]{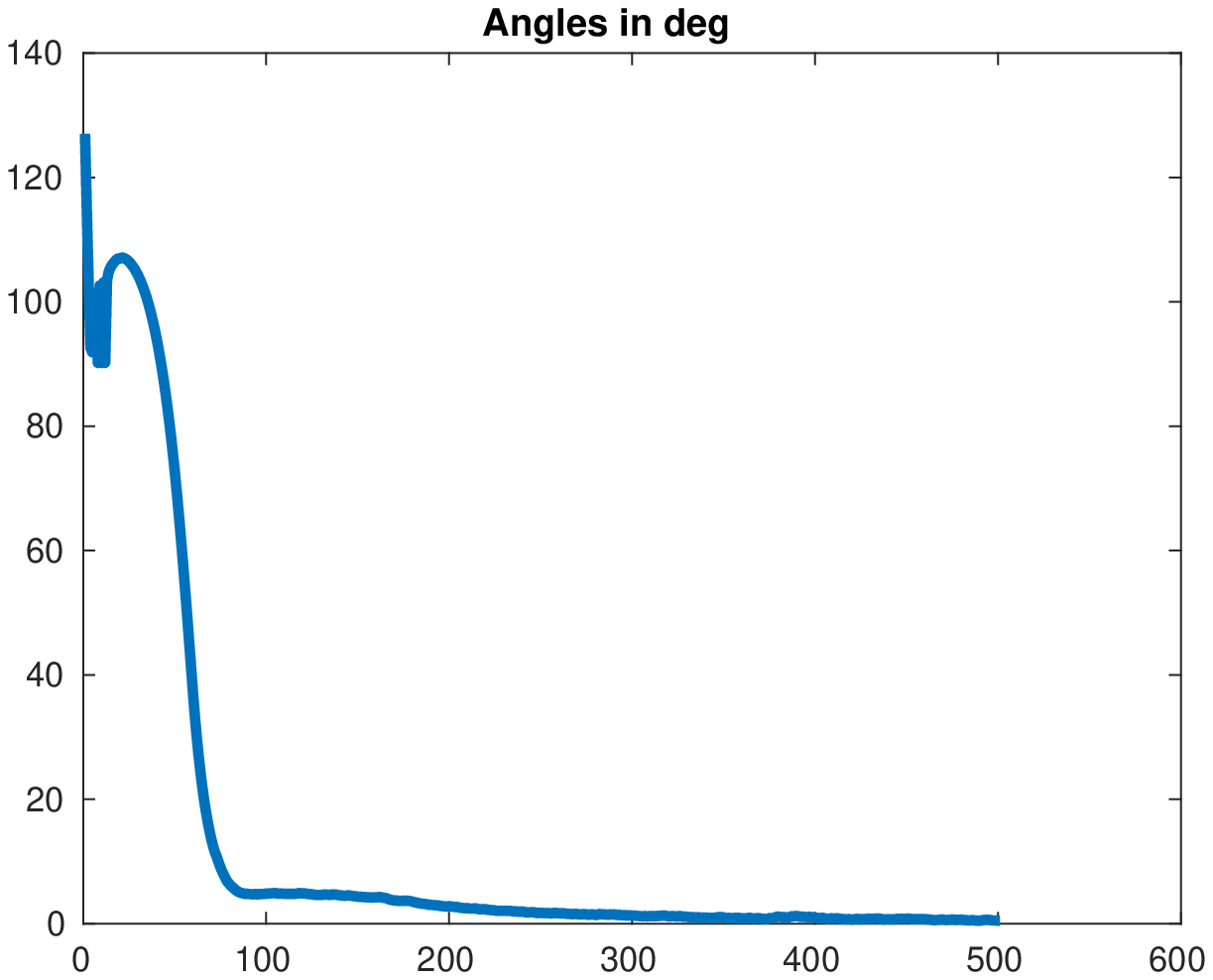}&\includegraphics[width=.4\textwidth]{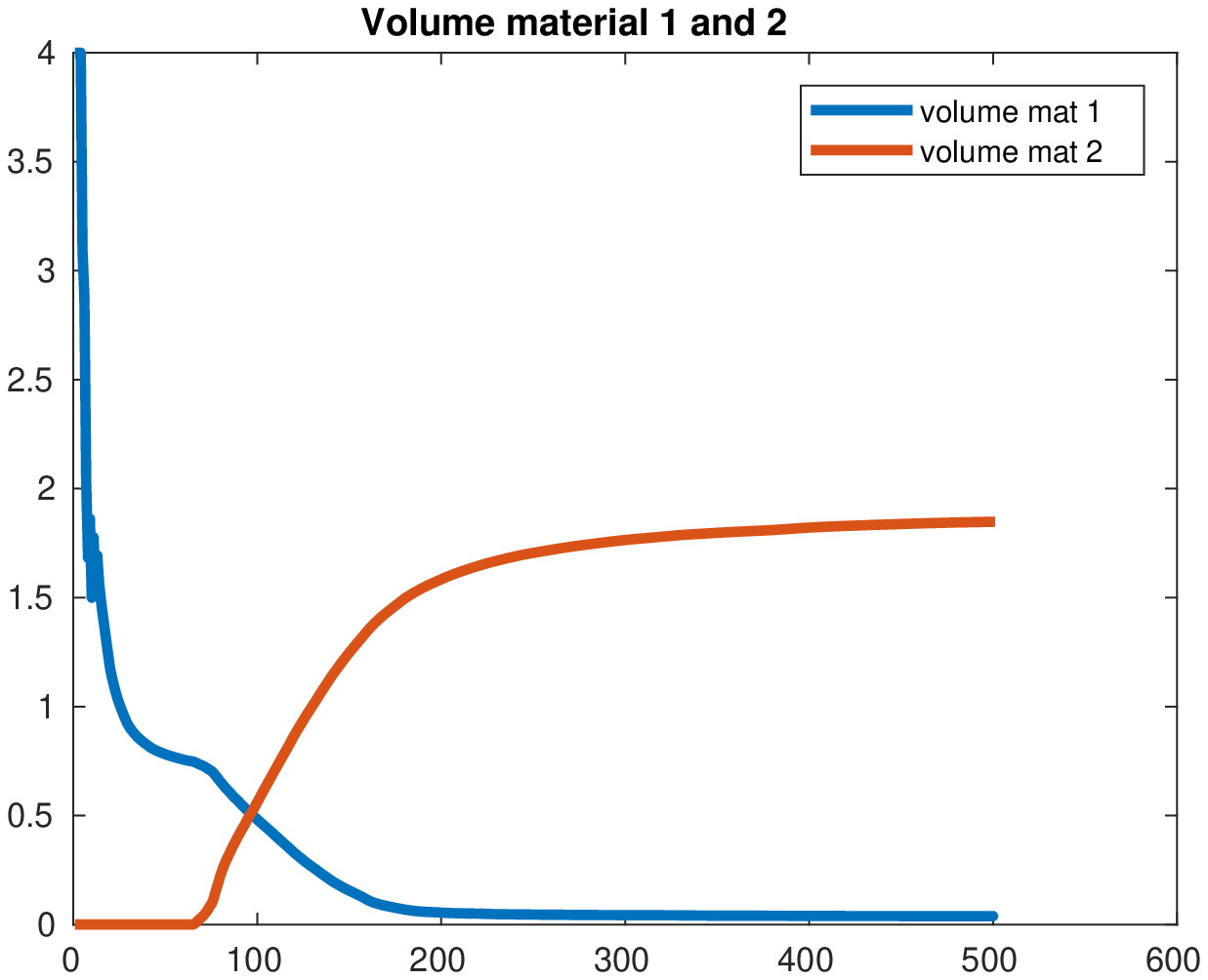}  \\
		(a) & (b)
	\end{tabular}
    \end{center}
	\caption{Evolution of angle and volumes in the course of Algorithm \ref{algo_TDLSMmat} for mast example. (a) Angle $\theta$ between level set function and generalized topological derivative. (b) Volumes of strong material (material 1) and weak material (material 2).}
	\label{fig_graphs_mast}
\end{figure}

\vspace{2mm}

The algorithm introduced in this paper is general and can handle an arbitrary number $M$ of different materials. As mentioned in Remark \ref{rem_interface}, when dealing with PDE-constrained topology optimization problems, the treatment of the interface is important. Also for higher values of $M$, a linear interpolation of the material parameters in elements that are cut by an interface based on the volume fractions similar to \eqref{eq_matInterpol_3mat} is definitely possible, but can be cumbersome as it involves many case distinctions. For this reason, we will illustrate the case of many (here: $M=8$) materials using an academic example without a constraining PDE. We will see that, in this case, we obtain the expected results even when disregarding the material interfaces.

\subsection{Academic Example with M=8} 
We consider an academic multi-material topology optimization problem which is not constrained by a partial differential equation. We search for the optimal distribution of a given number $M$ of materials inside the fixed domain $D = (0,1)^2$. For $i \in \{1, \dots, M\}$ let $f_i: D \rightarrow \mathbb R$ be continuous functions. We aim to minimize the multi-material shape function $\mathcal J: \mathcal A_M \rightarrow \mathbb R$ defined by
\begin{align} \label{eq_topOptiProblem_noPDE}
	\mathcal J(\Omega_1, \dots, \Omega_M) := \sum_{\ell=1}^M \int_{\Omega_\ell} f_\ell(x) \, \mbox dx.
\end{align}
The exact solution for this problem is given by 
\begin{equation*}
	\overline{\Omega_\ell^*} = \{x \in D: f_\ell(x) = \underset{j=1,\dots,M}{\mbox{min}} f_j(x) \}
\end{equation*}
for $\ell =1,\dots,M$. We conducted our experiments for functions $f_\ell$ of the form
\begin{equation*}
	f_\ell(x) = w_\ell \|x-x_m^{(\ell)}\| - d_\ell
\end{equation*}
where $\| \cdot \|$ denotes the Euclidean norm in $\mathbb R^2$, $w_\ell, d_\ell \in \mathbb R$ and $x_m^{(\ell)} \in D$. We chose $M=8$ materials and the data for $w_\ell, d_\ell$ and $x_m^{(\ell)}$ as given in Table \ref{tab_data_example1}. Recall the notation $B(x, \delta)$ for the ball around point $x$ of radius $\delta$. The optimal subdomains are then given by
\begin{align*}
    \Omega_1^* &= (0,1)^2 \setminus \overline{B( (1/2, 1/2)^\top, 0.45)}, \\
    \Omega_3^* &= B( (1/2, 1/2)^\top, 0.2) \setminus \overline{B( (1/2, 1/2)^\top, 0.075)}, \\
    \Omega_4^* &= B( (1/2, 1/2)^\top, 0.075),\\
    \overline{\Omega_i^*} &= \{ x \in D: f_i(x) = \underset{j=1,\dots, 8}{\mbox{min}} f_j(x) \} = \{  x \in D: f_i(x) \leq f_2(x) \}, \, i=5,6,7,8, \\
    \Omega_2^* &= B( (1/2, 1/2)^\top, 0.45) \setminus \overline{B( (1/2, 1/2)^\top, 0.2) \cup \Omega_5^*\cup \Omega_6^*\cup \Omega_7^*\cup \Omega_8^*}.
\end{align*}

We start with a homogeneous material distribution $\Omega_1 = \dots= \Omega_7 = \emptyset$, $\Omega_8 = D$, see Figure \ref{fig_example1_init} (a).
It can be easily verified that the topological derivative of the objective function $\mathcal J$ introduced in \eqref{eq_topOptiProblem_noPDE} for switching material around a point $z \in \Omega_i$ to material $j$ reads
\begin{align*}
    \TD^{i \rightarrow j}(z) = f_j(z) - f_i(z)
\end{align*}
when choosing $\ell(\varepsilon) = |\omega_\varepsilon|$. In this case, the topological derivative is independent of the particular choice of the shape of the inclusion $\omega$.

We applied Algorithm \ref{algo_TDLSMmat} with a constant step size $\kappa =0.5$ and no line search; see Figure \ref{fig_example1_init} (b) for an intermediate design after three iterations. We observe that, for this simple academic example, the angle $\theta$ between the vector-valued level set function $\psi$ and the generalized topological derivative $ G$ decreases monotonically to zero, see Figure \ref{fig_example1_final} (a), and we reach the optimal material distribution, see Figure \ref{fig_example1_final} (b). 

This example serves as a proof of concept and shows that an arbitrary number of materials can be treated. For this example, the treatment of the interfaces between different materials was not critical and we assigned each element of the underlying triangular grid completely to one of the materials based on the value of the vector-valued level set function at the centroid of the triangle. The implementation of this example was done in \texttt{MATLAB}.

\begin{figure}
	\begin{tabular}{cc}
		\includegraphics[width=.5\textwidth]{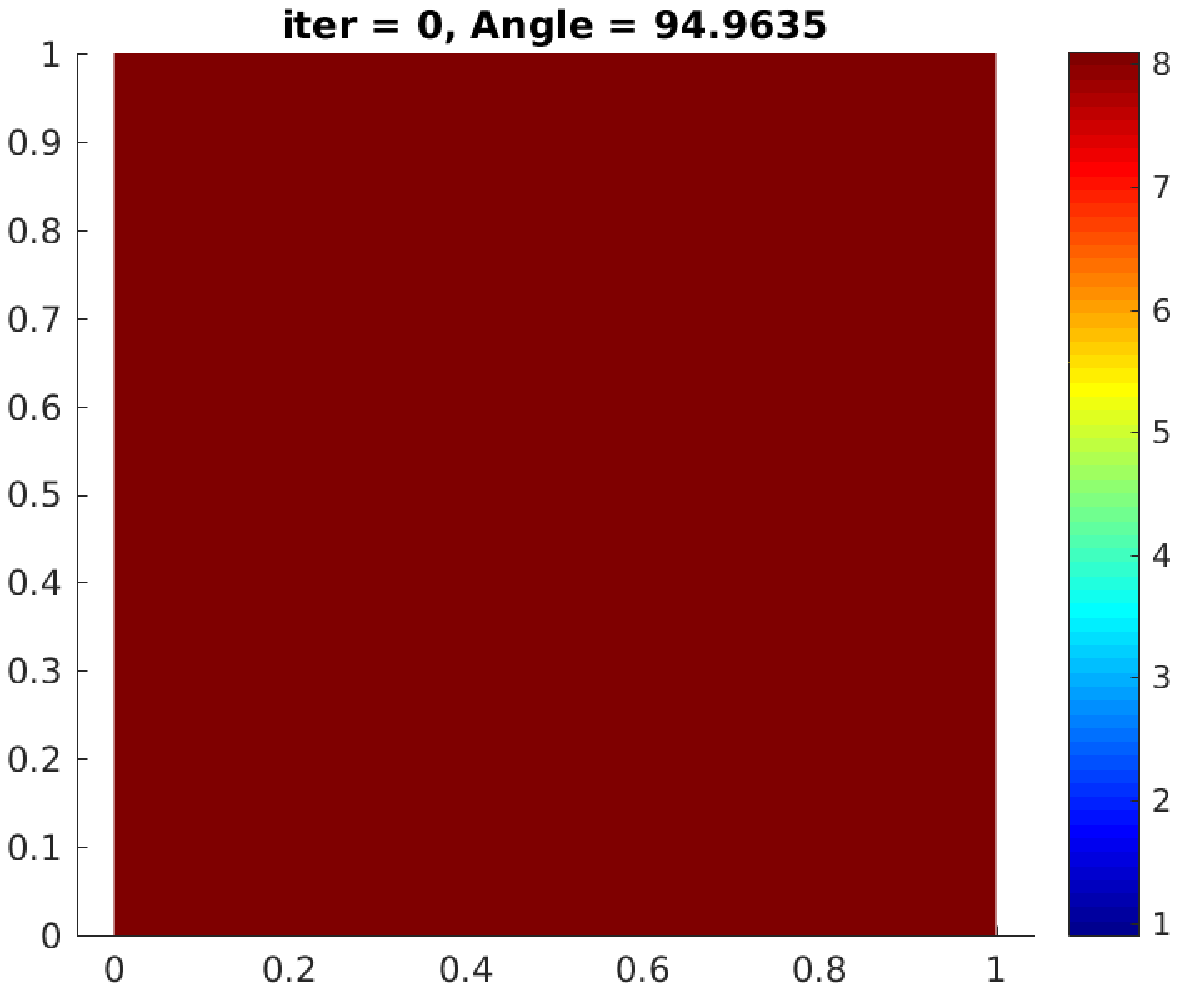}& \includegraphics[width=.5\textwidth]{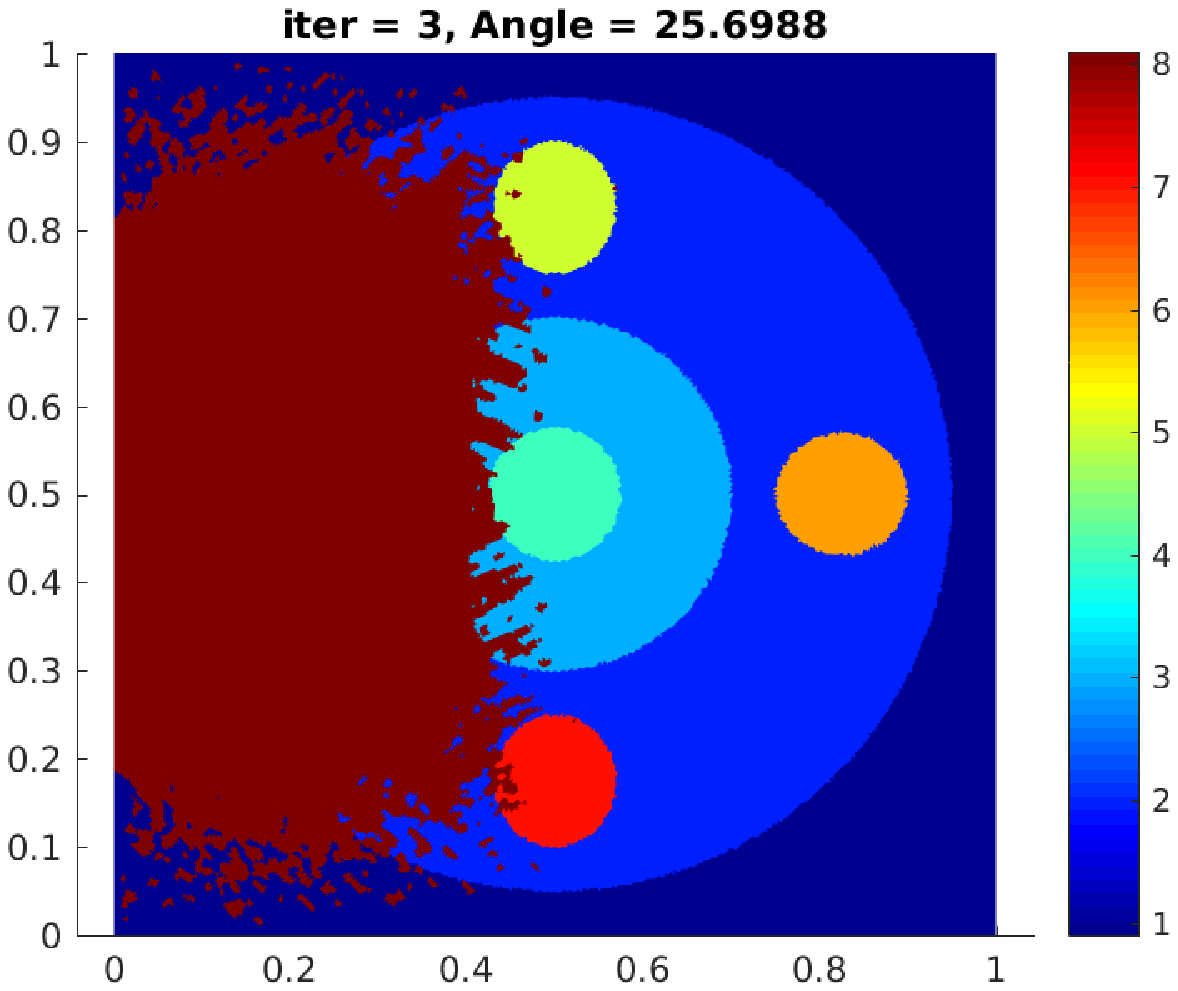} \\
		(a)& (b)
	\end{tabular}
	\caption{(a) Initial design for academic topology optimization problem \eqref{eq_topOptiProblem_noPDE}. (b) Intermediate design after three iterations of Algorithm \ref{algo_TDLSMmat}.}
	\label{fig_example1_init}
\end{figure}

\begin{figure}
	\begin{tabular}{cc}
		 \includegraphics[width=.5\textwidth]{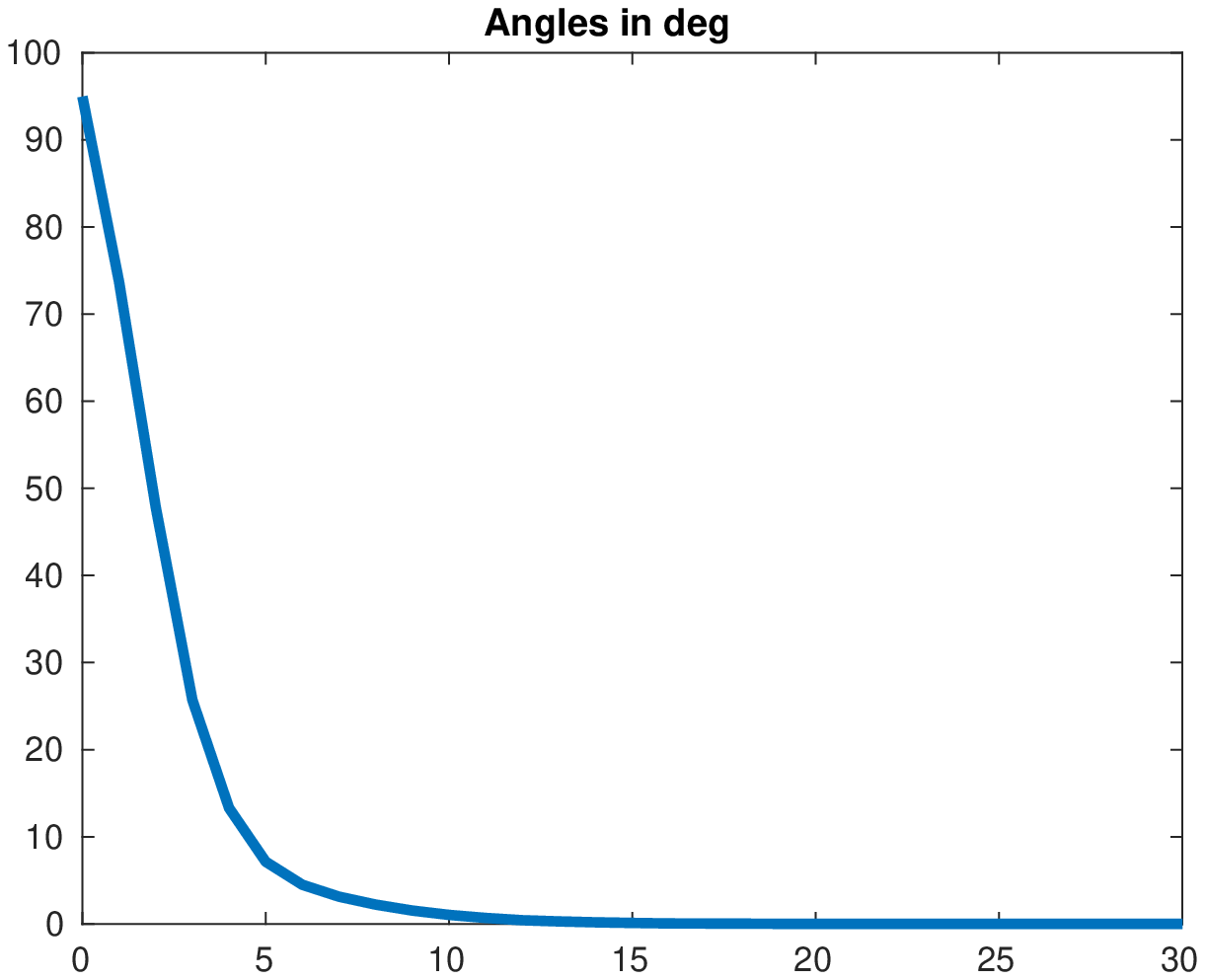} & \includegraphics[width=.5\textwidth]{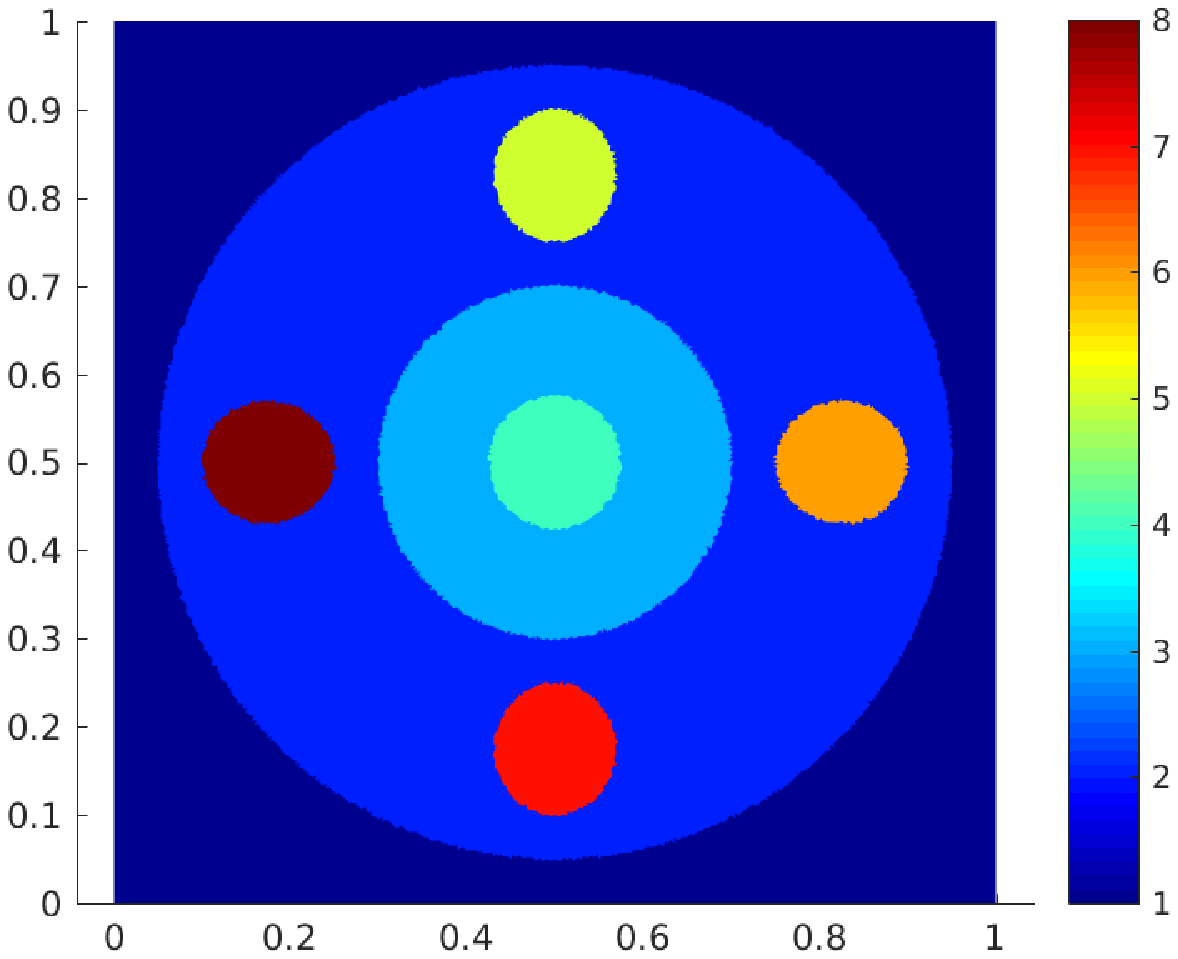} \\
		 (a) & (b)
	\end{tabular}
	\caption{(a) Evolution of angle between vector-valued level set function $\psi$ and generalized topological derivative $ G_\psi$ in optimization. (b) Final design for academic topology optimization problem \eqref{eq_topOptiProblem_noPDE}; angle = $5.9764 \cdot 10^{-6}$ degrees. }
	\label{fig_example1_final}
\end{figure}

\begin{table}
	\begin{center}
	 	\begin{tabular}{c|r|c|l}
			
			$\ell$ & $w_\ell$ & $x_m^{(\ell)}$ & $d_\ell$ \\ \hline \hline
			1 &	0 & -- & 0\\
			2 & 1 & $(0.5, 0.5)^\top$	& 0.45	\\
			3 & $\frac{5}{4}$ & $(0.5, 0.5)^\top$	& 0.5	\\
			4 & $\frac{95}{12}$ & $(0.5, 0.5)^\top$	& 1\\
			5 & 2 & $(0.5, 0.7875)^\top$	& 0.275	\\
			6 & 2 & $(0.7875, 0.5)^\top$	& 0.275	\\
			7 & 2 & $(0.5, 0.2125)^\top$	& 0.275	\\
			8 & 2 & $(0.2125, 0.5)^\top$	& 0.275	\\ 
		\end{tabular}
	\end{center}
	\caption{Data used for problem \eqref{eq_topOptiProblem_noPDE}.}
	\label{tab_data_example1}
\end{table}

\section*{Conclusion and Outlook}
We introduced a multi-material topology optimization method using a level-set framework, which is based on the concepts of topological derivatives. The idea was to represent a design consisting of $M$ different phases by means of a vector-valued level set function mapping into $\mathbb R^{M-1}$. Dividing the Euclidean space $\mathbb R^{M-1}$ into $M$ sectors, each sector corresponds to one material. Based on this splitting we introduced a local optimality condition for a design consisting of $M$ materials using topological derivatives. The proposed algorithm strives to reach this optimality condition using a fixed point iteration. We illustrated the potential of the algorithm for classical compliance minimization problems with $M=3$ materials as well as an academic example consisting of $M=8$ materials. We believe that, besides applications of topology optimization, the method could also be useful in applications from mathematical image segmentation, e.g. for treating a Mumford-Shah type functional as it was done in \cite{HintermuellerLaurain2009}.

\section*{Acknowledgement}
The author has been supported by Graz University of Technology and would like to gratefully acknowledge the funding. Moreover, the author is grateful for the discussions on the topic of the paper with Samuel Amstutz and to Charles Dapogny for making his FreeFem++ code of various topology optimization approaches publicly available at \cite{DapognyCode2018}.

\appendix
\section[A]{MATLAB implementation for data structure} \label{appendix_code}
\definecolor{mygreen}{rgb}{0,0.6,0}
\definecolor{mygray}{rgb}{0.5,0.5,0.5}
\definecolor{mymauve}{rgb}{0.58,0,0.82}

\lstset{ 
  backgroundcolor=\color{white},   
  basicstyle=\footnotesize,        
  breakatwhitespace=false,         
  breaklines=true,                 
  captionpos=b,                    
  commentstyle=\color{mygreen},    
  deletekeywords={...},            
  escapeinside={\%*}{*)},          
  extendedchars=true,              
  firstnumber=1000,                
  frame=single,	                   
  keepspaces=true,                 
  keywordstyle=\color{blue},       
  language=Octave,                 
  morekeywords={*,...},            
  numbers=left,                    
  numbersep=5pt,                   
  numberstyle=\tiny\color{mygray}, 
  rulecolor=\color{black},         
  showspaces=false,                
  showstringspaces=false,          
  showtabs=false,                  
  stepnumber=2,                    
  stringstyle=\color{mymauve},     
  tabsize=2,	                   
  title=\lstname                   
}

\lstinputlisting[language=Octave, title=createDataStructure.m]{createDataStructure.m}
\lstinputlisting[language=Octave, title=getNormal.m]{getNormal.m}
\lstinputlisting[language=Octave, title = isInSector.m]{isInSector.m}

\bibliography{multimat}
\bibliographystyle{plain}

\end{document}